\newcommand{\vk}{\varkappa}
\newcommand{\BR}{\mathbb{R}}
\newcommand{\al}{\alpha}
\newcommand{\de}{\delta}
\newcommand{\La}{\Lambda}
\newcommand{\ME}{\mathbf E}
\newcommand{\CF}{\mathcal F}
\newcommand{\MP}{\mathbf P}
\newcommand{\CA}{\mathcal A}
\newcommand{\CL}{\mathcal L}
\newcommand{\CN}{\mathcal N}
\newcommand{\Oa}{\Omega}
\newcommand{\oa}{\omega}
\newcommand{\si}{\sigma}
\newcommand{\pa}{\partial}
\renewcommand{\phi}{\varphi}
\newcommand{\eps}{\varepsilon}
\newcommand{\la}{\lambda}
\newcommand{\Ra}{\Rightarrow}
\newcommand{\ol}{\overline}
\newcommand{\CM}{\mathcal M}
\newcommand{\norm}[1]{\lVert#1\rVert}
\renewcommand{\comment}[1]{}
\newcommand{\CD}{\mathcal D}
\newcommand{\md}{\mathrm{d}}
\newcommand{\ml}{\mathfrak l}
\DeclareMathOperator{\Exp}{Exp}
\DeclareMathOperator{\TV}{TV}
\begin{document}


\theoremstyle{plain}
\newtheorem{thm}{Theorem}[section]
\newtheorem*{thmnonumber}{Theorem}
\newtheorem{lemma}[thm]{Lemma}
\newtheorem{prop}[thm]{Proposition}
\newtheorem{cor}[thm]{Corollary}
\newtheorem{open}[thm]{Open Problem}

\theoremstyle{definition}
\newtheorem{defn}{Definition}
\newtheorem{asmp}{Assumption}
\newtheorem{notn}{Notation}
\newtheorem{prb}{Problem}

\theoremstyle{remark}
\newtheorem{rmk}{Remark}
\newtheorem{exm}{Example}
\newtheorem{clm}{Claim}

\author{Andrey Sarantsev}

\title[Reflected Jump-Diffusions]{Explicit Rates of Exponential Convergence\\ for Reflected Jump-Diffusions on the Half-Line}

\address{Department of Statistics and Applied Probability, University of California, Santa Barbara}

\email{sarantsev@pstat.ucsb.edu}

\date{November 14, 2016. Version 21}

\keywords{Lyapunov function, stochastically ordered process, stochastic domination, reflected diffusion, reflected jump-diffusion, uniform ergodicity, exponential rate of convergence, competing Levy particles, Levy process, gap process, jump measure, reflected Levy process}

\subjclass[2010]{Primary 60J60, secondary 60J65, 60H10, 60K35}

\begin{abstract} Consider a reflected jump-diffusion on the positive half-line. Assume it is stochastically ordered. We apply the theory of Lyapunov functions and find explicit estimates for the rate of exponential convergence to the stationary distribution, as time goes to infinity. This continues the work of Lund, Meyn and Tweedie (1996). We apply these results to systems of two competing Levy particles with rank-dependent dynamics.
\end{abstract}

\maketitle

\section{Introduction}

\subsection{Exponential convergence of Markov processes} 
On a state space $\mathfrak X$, consider a Markov process $X = (X(t), t \ge 0)$ with generator $\mathcal M$ and transition kernel $P^t(x, \cdot)$. Existence and uniqueness of a stationary distribution $\pi$ and convergence $X(t) \to \pi$ as $t \to \infty$ have been extensively studied. One common method to prove an exponential rate of convergence to the stationary distribution $\pi$ is to construct a Lyapunov function: that is, a function $V : \mathfrak X \to [1, \infty)$, for which 
$$
\mathcal M V(x) \le -kV(x) + b1_E(x),\ \ x \in \mathfrak X,
$$
where $b, k > 0$ are constants, and $E$ is a ``small'' set. There is a precise term {\it small set} in this theory. In this article, $\mathfrak X = \BR_+ := [0, \infty)$, and for our purposes we can assume $E$ is a compact set. If there exists a Lyapunov function $V$, then (under some additional technical assumptions: irreducibility and aperiodicity), there exists a unique stationary distribution $\pi$, and for every $x \in \mathfrak X$, the transition probability measure $P^t(x, \cdot)$ converges to $\pi$ in total variation as $t \to \infty$. Moreover, the convergence is exponentially fast. More precisely, suppose $\norm{\cdot}$ denotes the total variation norm or a similar norm for signed measures on $\mathfrak X$. (In Section 3, we speciy the exact norm which we are using.) Then for some positive constants $C(x)$ and $\vk$, we have:
\begin{equation}
\label{eq:original-inequality}
\norm{P^t(x, \cdot) - \pi(\cdot)} \le C(x)e^{-\vk t}.
\end{equation}
Results along these lines can be found in \cite{DMT1995, MT1993a, MT1993b}, as well as in many other articles. Similar results are known for discrete-time Markov chains; the reader can consult the classic book \cite{MeynBook}. However, to estimate the constant $\vk$ is a much harder task: See, for example, \cite{BCG2008, Davies1986, MT1994, RR1996, RT1999, Rosenthal1995, Zeifman1991}. In the general case, the exact value of $\vk$ depends in a complicated way on the constants $b$ and $k$, on the set $E$, and on the transition kernel $P^t(x, \cdot)$. 

Under some conditions, however, we can simply take $\vk = k$. This happens when $\mathfrak X = \BR_+$, $E = \{0\}$, and the process $X$ is stochastically ordered. The latter means that if we start two copies $X'$ and $X''$ of this process from initial conditions $x' \le x''$, then we can couple them so that a.s. for all $t \ge 0$, we have: $X'(t) \le X''(t)$. This remarkable result was proved in \cite[Theorem 2.2]{LMT1996}. (A preceding paper \cite{LT1996} contains similar results for stochastically ordered discrete-time Markov chains.)  In addition, in \cite[Theorem 2.4]{LMT1996}, they also prove that even for a possibly non-stochastically ordered Markov process on $\BR_+$, if it is stochastically dominated by another stochastically ordered Markov process on $\BR_+$ with a Lyapunov function with $E = \{0\}$, then the original process converges with exponential rate $\vk = k$. Let us also mention a paper \cite{RT2000}, which generalizes this method for stochastically ordered Markov processes when $E \ne \{0\}$ (however, the results there are not nearly as simple as $\vk = k$). 


\subsection{Our results} In this paper, we improve upon these results. First, in Theorem ~\ref{thm:2}, we prove that $\vk = k$ for stochastically ordered processes (a version of \cite[Theorem 2.2]{LMT1996}) under slightly different assumptions, with an improved constant $C(x)$. Second, in Theorem~\ref{thm:non-stoch-ordered}, we prove a stronger version of \cite[Theorem 2.4]{LMT1996} for non-stochastically ordered processes (because the norm in~\eqref{eq:original-inequality} is stronger in our paper). In particular, our result allows for convergence of moments, which does not follow from \cite[Theorem 2.4]{LMT1996}. Third, in Lemma~\ref{lemma:exact-rate}, we show that in a certain case, this rate $\vk$ of convergence is exact: one cannot improve the value of $\vk$; this serves as a counterpart of \cite[Theorem 2.3]{LMT1996}. Next, we apply this theory to reflected jump-diffusions on $\BR_+$.

\subsection{Reflected jump-diffusions}  A reflected jump-diffusion process $Z = (Z(t), t \ge 0)$ on the positive half-line $\BR_+$ is a process that can be described as follows: As long as it is away from zero, it behaves as a diffusion process with drift coefficient $g(\cdot)$ and diffusion coefficient $\si^2(\cdot)$. When it htis zero, it is reflected back into the positive half-line. It can also make jumps: Take a family $(\nu_x)_{x \ge 0}$ of Borel measures on $\BR_+$. If this process is now at point $x \in \BR_+$, it can jump with intensity $r(x) = \nu_x(\BR_+)$, and the destination of this jump is itself a random point, distributed according to the probability measure $r^{-1}(x)\nu_x(\cdot)$. If $r(x) = 0$, then this process cannot jump from $x$. A rigorous definition is given in Section 2.  

We can similarly define reflected diffusions and jump-diffusions in a domain $D \subseteq \BR^d$. These processes have many applications: Among a multitude of existing articles and monographs, let us mention \cite{ChenBook, KellaWhitt1990, KushnerBook, Whitt2001, WhittBook} and references therein for applications to stochastic networks. 
Existence and uniqueness of a stationary distribution and convergence to this stationary distribution as $t \to \infty$ for these processes were intensively studied recently. Among many references, we point out the articles \cite{ABD2001, BudhirajaLee, DW1994} for reflected diffusions and \cite{KellaWhitt, AB2002, Piera2008} for reflected jump-diffusions. However, these papers do not include explicit estimates of the exponential rate of convergence.

In this paper, we first prove a general exponential convergence result for a reflected jump-diffusion on $\BR_+$: this is Theorem~\ref{thm:1}, which does not provide an explicit rate $\vk$ of exponential convergence. Next, we find an explicit rate of convergence for a stochastically ordered reflected jump-diffusion in Theorem~\ref{thm:3}, and for a non-stochastically ordered reflected jump-diffusion (dominated by another stochastically ordered reflected jump-diffusion) in Corollary~\ref{cor:non-stoch-ordered-RJD}. 


\subsection{Systems of competing Levy particles}
Finally, we apply our results to systems of two {\it competing Levy particles}, which continues the research from \cite{Ichiba11, BFK2005, Shkolnikov2011}. In these systems, each particle is a one-dimensional Levy process. Its drift and diffusion coefficients and the jump measure depend on the current rank of the particle relative to other particles. Such systems are applied in mathematical finance in \cite{CP2010, FernholzKaratzasSurvey, JM2013b}. 

\subsection{Organization of the paper} In Section 2, we introduce all necessary definitions, and construct there reflected jump-diffusion processes. In Section 3, we prove exponential convergence under some fairly general conditions, but without finding or estimating a rate of exponential convergence. In Section 4, we prove $\vk = k$ for stochastically ordered processes, and in Section 5, for processes dominated by a stochastically ordered process. In Section 6, we show that in a certain particular case, our estimate of the rate of convergence is exact. Then we apply these results in Section 7 to systems of two competing Levy particles.

\subsection{Notation} Weak convergence of measures or random variables is denoted by $\Ra$. We denote $\BR_+ := [0, \infty)$ and $\BR_- := (-\infty, 0]$. The Dirac point mass at the point $x$ is denoted by $\de_x$. Take a Borel (signed) measure $\nu$ on $\BR_+$. For a function $f : \BR_+ \to \BR$, denote $(\nu, f) := \int_{\BR_+}f\md\nu$. For a function $f : \BR_+ \to [1, \infty)$, we define the following norm:
\begin{equation}
\label{eq:f-norm}
\norm{\nu}_f := \sup\limits_{|g| \le f}|(\nu, g)|.
\end{equation}
For $f \equiv 1$, this is the {\it total variation norm}: $\norm{\cdot}_f \equiv \norm{\cdot}_{\TV}$. In the rest of the article, we operate on a filtered probability space $(\Oa, \CF, (\CF_t)_{t \ge 0}, \MP)$ with the filtration satisfying the usual conditions. For a function $f : \BR_+ \to \BR$, we let $\norm{f} := \sup_{x \ge 0}|f(x)|$. We denote by $\Exp(\la)$ the exponential distribution on the positive half-line with mean $\la^{-1}$ (and rate $\la$). 

\section{Definition and Construction of Reflected Jump-Diffusions} First, let us define a reflected diffusion on $\BR_+$ without jumps. Take functions $g : \BR_+ \to \BR$ and $\si : \BR_+ \to (0, \infty)$. Consider an $(\CF_t)_{t \ge 0}$\,-\,Brownian motion $W = (W(t), t \ge 0)$. 

\begin{defn} A continuous adapted $\BR_+$-valued process $Y = (Y(t), t \ge 0)$ is called a {\it reflected diffusion} on $\BR_+$ with {\it drift coefficient} $g(\cdot)$ and {\it diffusion coefficient} $\si^2(\cdot)$, {\it starting from} $y_0 \in \BR_+$, if there exists another real-valued continuous nondecreasing adapted process $\ml = (\ml(t), t \ge 0)$ with $\ml(0) = 0$, which can increase only when $Y = 0$, such that for $t \ge 0$ we have:
$$
Y(t) = y_0 + \int_0^tg(Y(s))\md s + \int_0^t\si(Y(s))\md W(s) + \ml(t).
$$
\label{defn:SDE-R}
\end{defn}

\begin{asmp} The functions $g$ and $\si$ are Lipschitz continuous: for some constant $C_L > 0$,
$$
|g(x) - g(y)| + |\si(x) - \si(y)| \le C_L|x - y|,\ \ \mbox{for all}\ \ x, y \in \BR_+.
$$
Moreover, the function $\si$ is bounded away from zero: $\inf_{x \ge 0}\si(x) > 0$. 
\end{asmp}

It is well known (see, for example, the classic paper \cite{Skorohod}) that under Assumption 1, for every $y_0 \in \BR_+$, there exists a weak version of the reflected diffusion from Definition~\ref{defn:SDE-R}, starting from $y_0$, which is unique in law. Moreover, for different starting points $y_0 \in \BR_+$, these processes form a Feller continuous strong Markov family. We can define the transition semigroup $P^t$ which acts on functions: $f \mapsto P^tf$, as well as the transition kernel $P^t(x, \cdot)$ and the generator $\CA$:
\begin{equation}
\label{eq:CA}
\CA f(x) = g(x)f'(x) + \frac12\si^2(x)f''(x),\ \ \mbox{if}\ \ f'(0) = 0.
\end{equation}
Take a family $(\nu_x)_{x \ge 0}$ of finite Borel measures $\nu_x$ on 
$\BR_+$.

\begin{asmp} The family $(\nu_x)_{x \ge 0}$ is weakly continuous: $\nu_{x_n} \Ra \nu_{x_0}$ for $x_n \to x_0$. In addition, the function $r(x) := \nu_x(\BR_+)$ is bounded on $\BR_+$: $\sup_{x \ge 0}r(x) =: \rho < \infty$. 
\end{asmp}

Take a reflected diffusion $Y = (Y(t), t \ge 0)$ on $\BR_+$ with drift coefficient $g$ and diffusion coefficient $\si^2$. Using this process $Y$, let us construct a weak version of the reflected jump-diffusion $Z = (Z(t), t \ge 0)$ with the same drift and diffusion coefficients and with the family $(\nu_x)_{x \in \BR_+}$ of jump measures, starting from $y_0 \in \BR_+$. One way to do this is {\it piecing out}. 

For every $y \in \BR_+$, take infinitely many i.i.d. copies $Y^{(y, n)}$, $n = 1, 2, \ldots$ of the reflected diffusion $Y$, starting from $Y^{(y, n)}(0) = y$. For every $x \in \BR_+$, generate infinitely many i.i.d. copies $\xi^{(x, n)}$ of a random variable $\xi^{(x)} \sim r^{-1}(x)\nu_x(\cdot)$, independent of each other and of the copies of the reflected  diffusion $Y$. We assume all processes $Y^{(y, n)}$ are adapted to $(\CF_t)_{t \ge 0}$, and every $\sigma$-subalgebra $\CF_t$ contains all $\xi^{(x, n)}$ for $x \in \BR_+$ and $n = 1, 2, \ldots$ Start a process $Y^{(y_0, 1)}$. We kill it with intensity $r(Y^{(y_0, 1)}(t))$: If $\zeta_1$ is the killing time, then 
\begin{equation}
\label{eq:stopping-time}
\MP\left(\zeta_1 > t\right) = \exp\left(-\int_0^tr\left(Y^{(y_0, 1)}(s)\right)\md s\right).
\end{equation}
If $\zeta_1 < \infty$, let $x_1 := Y^{(y_0, 1)}(\zeta_1)$, and let $y_1 := \xi^{(x_1, 1)}$. Start the process $Y^{(y_1, 2)}$, and kill it at time $\zeta_2$ with intensity $r(Y^{(y_1, 2)}(t))$, similarly to~\eqref{eq:stopping-time}, etc. Because $r(x) \le \rho$ for $x \in \BR_+$, one can find a sequence of i.i.d. $\eta_1, \eta_2, \ldots \sim \Exp(\rho)$ such that a.s. for all $k$ we have: $\zeta_k \ge \eta_k$. Therefore,
\begin{equation}
\label{eq:tends-to-infty}
\tau_k := \zeta_1 + \ldots + \zeta_k \ge \eta_1 + \ldots + \eta_k \to \infty\ \ \mbox{a.s. as}\ \ k \to \infty.
\end{equation}
Define the process $Z = (Z(t), t \ge 0)$ as follows: for $t \in [\tau_k, \tau_{k+1})$, $k = 0, 1, 2, \ldots$, let $Z(t) = Y^{(y_k, k+1)}(t - \tau_k)$.
In other words, it jumps at moments $\tau_1, \tau_2, \ldots$, and behaves as a reflected diffusion without jumps on $\BR_+$ on each interval $(\tau_k, \tau_{k+1})$. Because of~\eqref{eq:tends-to-infty}, this defines $Z(t)$ for all $t \ge 0$. The following result is proved in \cite[Theorem 2.4, Theorem 5.3, Example 1]{Sawyer}. 

\begin{prop} Under Assumptions 1 and 2, the construction above yields a Feller continuous strong Markov process on $\BR_+$ with generator 
\begin{equation}
\label{eq:CL}
\CL = \CA + \CN,
\end{equation}
where the operator $\CA$ is given by~\eqref{eq:CA}, and $\CN$ is defined by 
\begin{equation}
\label{eq:CN}
\CN f(x) := \int_0^{\infty}\left[f(y) - f(x)\right]\nu_x(\md y).
\end{equation}
\label{prop:Feller}
\end{prop}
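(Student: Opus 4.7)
The plan is to verify the hypotheses of Sawyer's piecing-out framework and then identify the generator by Dynkin's formula.

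First, I would check well-definedness and the strong Markov property. The bound~\eqref{eq:tends-to-infty} already shows the jump times $\tau_k$ diverge a.s., so $Z$ is defined on all of $[0,\infty)$ without explosion. The ordinary Markov property follows from the construction: on each interval $[\tau_k,\tau_{k+1})$, the process is a reflected diffusion started afresh at the $\CF_{\tau_k}$-measurable location $y_k$, using independent copies of $Y$ and independent variables $\xi^{(x,n)}$. To promote this to the strong Markov property, I would combine the strong Markov property of the underlying reflected diffusion $Y$ (standard under Assumption~1 via Skorokhod) with the memoryless property of the exponential killing mechanism~\eqref{eq:stopping-time}; this is exactly the content of \cite[Theorem 2.4]{Sawyer}.

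Next, I would verify Feller continuity. The transition kernel of $Z$ admits a series expansion indexed by the number of jumps before time $t$: the ``no-jump'' term is
\[
\ME_x\Bigl[f(Y(t))\exp\Bigl(-\int_0^t r(Y(s))\,\md s\Bigr)\Bigr],
\]
which is continuous in $x$ because $Y$ is Feller and $r$ is bounded and continuous (weak continuity of $\nu_x$ yields continuity of $r(x) = (\nu_x,1)$). The $k$-jump term involves repeated integrals against $\nu_{(\cdot)}$, and continuity in the starting point follows by induction from the weak continuity of $x\mapsto\nu_x$ composed with the Feller property of $Y$. The series converges uniformly because the total jump rate is bounded by $\rho$, yielding $P^t f\in C_b(\BR_+)$ for $f\in C_b(\BR_+)$; this is \cite[Theorem 5.3]{Sawyer}.

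Finally, I would identify the generator. For $f\in C^2_b(\BR_+)$ satisfying $f'(0)=0$, Itô's formula applied on the diffusion interval $[0,\tau_1)$ gives
\[
f(Z(t\wedge\tau_1))-f(y_0) = \int_0^{t\wedge\tau_1}\!\! \CA f(Z(s))\,\md s + \int_0^{t\wedge\tau_1}\!\! \si(Z(s))f'(Z(s))\,\md W(s) + \int_0^{t\wedge\tau_1}\!\! f'(Z(s))\,\md\ml(s),
\]
and the local-time integral vanishes thanks to the Neumann condition $f'(0)=0$, since $\ml$ grows only when $Z=0$. By construction the jump occurs at rate $r(x)$ with post-jump law $r^{-1}(x)\nu_x$, so the jump contribution to $\tfrac{\md}{\md t}\ME_x f(Z(t))\big|_{t=0}$ equals $r(x)\bigl[(r^{-1}(x)\nu_x,f)-f(x)\bigr] = (\nu_x,f)-r(x)f(x) = \CN f(x)$. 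Combining the two pieces and differentiating at $t=0$ yields $\CL f(x) = \CA f(x) + \CN f(x)$, in accordance with \cite[Example 1]{Sawyer}. The main obstacle is the inductive bookkeeping for the Feller property (controlling the $k$-fold convolution of sub-Markov kernels uniformly in $x$) and, more subtly, justifying that the boundary local-time term disappears from the Itô decomposition; the rest is a routine appeal to the cited piecing-out framework.
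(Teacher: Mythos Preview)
Your proposal is correct and follows the same approach as the paper: the paper does not supply its own argument but simply cites \cite[Theorem 2.4, Theorem 5.3, Example 1]{Sawyer}, and your sketch is precisely a verification of the hypotheses of that piecing-out framework together with the standard Dynkin/It\^o computation for the generator. Your write-up is thus more detailed than what the paper provides, but there is no substantive difference in strategy.
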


\section{Lyapunov Functions and Exponential Convergence} 

In this section, we define Lyapunov functions of a Markov process on $\BR_+$ and relate them to convergence of this Markov process to its stationary distribution with an exponential rate. We apply this theory to the case of reflected jump-diffusions. However, we do not find an explicit rate $\vk$ of exponential convergence: this requires stochastic ordering, which is done in Section 4. Our definitions are taken from \cite{MyOwn10} and are slightly different from the usual definitions in the classic articles \cite{DMT1995, MT1993a, MT1993b}. These adjusted definitions seem to be more suitable for our purposes. 

\subsection{Notation and definitions} Take a Feller continuous strong Markov family $(P^t)_{t \ge 0}$ on $\BR_+$ with generator $\mathcal M$, which has a domain $\CD(\mathcal M)$. Let $P^t(x, \cdot)$ be the corresponding transition kernel. Slightly abusing the terminology, we will sometimes speak interchangeably about the Markov process $X = (X(t), t \ge 0)$ or the Markov kernel $(P^t)_{t \ge 0}$. We use the standard Markovian notation: $\mu P^t$ is the result of the action of $P^t$ on a measure $\mu$; symbols $\MP_x$ and $\ME_x$ correspond to the copy of $X$ starting from $X(0) = x$. 



\begin{defn} Take a continuous function $V : \BR_+ \to [1, \infty)$ in the domain 
$\CD(\mathcal M)$. Assume there exist constants $b, k, z > 0$ such that 
\begin{equation}
\label{eq:Lyapunov}
\mathcal M V(x) \le - kV(x) + b1_{[0, z]}(x),\ \ x \in \BR_+.
\end{equation}
Then $V$ is called a {\it Lyapunov function} for this Markov family with {\it Lyapunov constant} $k$.  
\label{defn:Lyapunov}
\end{defn}

Let us now define the concept of exponential convergence to the stationary distribution. 
Take a function $W : \BR_+ \to [1, \infty)$ and a constant $\vk > 0$. Recall the definition of the norm $\norm{\cdot}_W$ from~\eqref{eq:f-norm}.


\begin{defn} 
This process is called {\it $W$-uniformly ergodic} with an {\it exponential rate of convergence} $\vk$ if there exists a unique stationary distribution $\pi$, and for some constant $D$, we have:
\begin{equation}
\label{eq:uniform-ergodicity}
\norm{P^t(x, \cdot) - \pi(\cdot)}_W \le DW(x)e^{-\vk t},\ \ x \in \BR_+,\ \ t \ge 0.
\end{equation}
\label{defn:exp-conv}
\end{defn}

Finally, let us introduce a technical property of this Markov family, which allows us to link Lyapunov functions from Definition~\ref{defn:Lyapunov} with exponential convergence from Definition~\ref{defn:exp-conv}. 

\begin{defn} The Markov process is called {\it totally irreducible} if for every $t > 0$, $x \in \BR_+$, and a subset $A \subseteq \BR_+$ of positive Lebesgue measure, we have: $P^t(x, A) > 0$. 
\end{defn}

%

The following result is the connection between Lyapunov functions and exponential convergence. It was proved in \cite{MyOwn10} and is slightly different from classic
results of \cite{MT1993a, MT1993b, DMT1995}.

\begin{prop} Assume the Markov process is totally irreducible with a Lyapunov function $V$. Then the process is $V$-uniformly ergodic, and the stationary distribution $\pi$ satisfies $(\pi, V) < \infty$. 
\label{prop:fund}
\end{prop}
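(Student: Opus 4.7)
The plan is to derive a uniform geometric moment bound from~\eqref{eq:Lyapunov}, combine it with a minorization on the compact set $[0, z]$, and then invoke a classical $V$-norm ergodic theorem in the style of Meyn and Tweedie.

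First I would apply Dynkin's formula to the semimartingale $e^{kt}V(X(t))$. Since~\eqref{eq:Lyapunov} rearranges to $\CM V + kV \le b\mathbf 1_{[0, z]}$, this yields
\begin{equation*}
\ME_x\bigl[e^{kt}V(X(t))\bigr] \le V(x) + b\int_0^t e^{ks}\MP_x\bigl(X(s) \in [0, z]\bigr)\,\md s,
\end{equation*}
and therefore $\ME_x V(X(t)) \le e^{-kt}V(x) + b/k$ for all $t \ge 0$ and all $x \in \BR_+$. This already gives tightness of the family $(P^t(x, \cdot))_{t \ge 0}$ and, once a stationary $\pi$ is identified, bounds $(\pi, V) \le b/k$ by Fatou's lemma.

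Second, I would establish that $[0, z]$ is a petite set for some skeleton chain $(X(nt_0))_{n \ge 0}$. Total irreducibility with respect to Lebesgue measure, combined with Feller continuity, makes $x \mapsto P^{t_0}(x, A)$ lower semicontinuous and strictly positive for every open $A$ of positive Lebesgue measure. A standard compactness-and-covering argument on $[0, z]$ (cf.\ \cite[Ch.~6]{MeynBook}) then produces $t_0 > 0$, a nontrivial Borel measure $\mu$ on $\BR_+$, and $\de > 0$ such that $P^{t_0}(x, \cdot) \ge \de\mu(\cdot)$ uniformly for $x \in [0, z]$.

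Third, with the geometric drift condition from the first step and the minorization from the second in hand, the Meyn--Tweedie $V$-norm ergodic theorem in its continuous-time form, as given for instance in~\cite[Theorem~6.1]{DMT1995} (or applied to the skeleton and interpolated between sampling times using the moment bound above), delivers uniqueness of $\pi$ and the desired exponential estimate~\eqref{eq:uniform-ergodicity} for some constants $D, \vk > 0$. The main obstacle is the second step: constructing a uniform minorization on $[0, z]$ is where Feller continuity and total irreducibility are doing all the work, whereas the drift estimate and the invocation of the ergodic theorem are an essentially standard application of the now-classical geometric drift machinery.
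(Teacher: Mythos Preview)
The paper does not supply its own proof of this proposition: it is simply quoted as a result from~\cite{MyOwn10}, with the remark that it is ``slightly different from classic results of \cite{MT1993a, MT1993b, DMT1995}.'' There is therefore no argument in the paper to compare against. Your outline is precisely the classical Meyn--Tweedie route that those references (and presumably~\cite{MyOwn10}) develop: a Foster--Lyapunov drift estimate on the exponential scale, petiteness of the compact set $[0,z]$ obtained from irreducibility plus Feller continuity, and then the continuous-time $V$-norm ergodic theorem. As a sketch it is correct and standard.

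Two small cautions. First, your tightness claim tacitly assumes $V$ is coercive (i.e.\ $V(x)\to\infty$ as $x\to\infty$); Definition~\ref{defn:Lyapunov} only requires continuity and $V\ge 1$, though in every concrete application in the paper $V=V_\la$ or $\ol V_\la$ is coercive, so this is harmless here. Second, the Dynkin step needs a localization: since $V$ is unbounded, the process $e^{kt}V(X(t))$ minus its compensator is a priori only a \emph{local} martingale, and one upgrades the resulting local supermartingale to a true one via Fatou's lemma using $V\ge 1$. This is exactly how the paper handles the analogous passage elsewhere (see the proofs of Theorems~\ref{thm:3} and~\ref{thm:non-stoch-ordered}), so it is routine, but it should be said explicitly.
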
 

\subsection{Main results} Let us actually construct a Lyapunov function for the reflected jump-diffusion process $Z = (Z(t), t \ge 0)$ from Section 2. We would like to take the following function:
\begin{equation}
\label{eq:V}
V_{\la}(x) := e^{\la x},\ \ x \in \BR_+,
\end{equation}
for some $\la > 0$. Indeed, the first and second derivative operators from~\eqref{eq:CA}, included in the generator $\CL$ from~\eqref{eq:CL}, act on this function in a simple way. However, $V'_{\la}(0) \ne 0$, which contradicts~\eqref{eq:CA}. Therefore, we cannot simply take $V_{\la}$ as a Lyapunov function; we need to modify it. Fix $s_2 > s_1 > 0$ and take a nondecreasing $C^{\infty}$ function $\phi : \BR_+ \to \BR_+$ such that 
\begin{equation}
\label{eq:phi}
\phi(s) = 
\begin{cases}
0,\ s \le s_1;\\
s,\ s \ge s_2;
\end{cases}
\ \ \phi(s) \le s\ \ \mbox{for}\ \ s \ge 0.
\end{equation}
The easiest way to construct this function is as follows. Take a mollifier: a nonnegative $C^{\infty}$ function $\oa : \BR \to \BR_+$ with $\int_{\BR}\oa(x)\md x = 1$, with support $[-\eps, \eps]$, where $\eps = (s_2 - s_1)/3$. One example of this is
$$
\oa_{\eps}(x) = c\exp\left(-(\eps - |x|)^{-1}\right),\ \ c > 0.
$$
Apply this mollifier to the following piecewise linear function:
$$
\ol{\phi}(s) = 
\begin{cases}
0,\ \ s \le s_1 + \eps;\\
(s_2-\eps)\frac{s-s_1 - \eps}{s_2-s_1 - 2\eps},\ \ s_1 +\eps \le s \le s_2 - \eps;\\
s,\ \ s \ge s_2 - \eps.
\end{cases}
$$
The convolution of $\ol{\phi}$ and $\omega_{\eps}$ gives us this necessary function $\phi$ which satisfies~\eqref{eq:phi}. Define a new candidate for a Lyapunov function: 
\begin{equation}
\label{eq:ol-V}
\ol{V}_{\la}(x) := e^{\la\phi(x)},\ \ x \in \BR_+.
\end{equation}
This function satisfies $\ol{V}'_{\la}(0) = 0$, because $\phi'(0) = 0$. Let us impose an additional assumption on the family $(\nu_x)_{x \in \BR_+}$ of jump measures.

\begin{asmp}
There exists a $\la_0 > 0$ such that 
$$
\sup\limits_{x \ge 0}\int_0^{\infty}e^{\la_0|y - x|}\nu_x(\md y) < \infty.
$$
\end{asmp}

Under Assumption 3, we can define the following quantity for $\la \in [0, \la_0]$ and $x \in \BR_+$:
\begin{equation}
\label{eq:k}
K(x, \la) = g(x)\la + \frac12\si^2(x)\la^2 + \int_0^{\infty}\left[e^{\la(y - x)} - 1\right]\nu_x(\md y).
\end{equation}
Now comes one of the two main results of this paper. This first result is a statement of convergence with exponential rate, but it does not provide an estimate for this rate.  

\begin{thm} Under Assumptions 1, 2, 3, suppose there exists a $\la \in (0, \la_0)$ such that
\begin{equation}
\label{eq:limsup-k}
\varlimsup\limits_{x \to \infty}K(x, \la) < 0.
\end{equation}
Then the reflected jump-diffusion is $V_{\la}$-uniformly ergodic, and the (unique) stationary distribution $\pi$ satisfies $(\pi, V_{\la}) < \infty$. 
\label{thm:1}
\end{thm}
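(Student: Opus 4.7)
\medskip
\noindent\textbf{Proof plan.} The strategy is to verify the hypotheses of Proposition~\ref{prop:fund} for the modified exponential function $\ol V_{\la}$ defined in~\eqref{eq:ol-V}, and then to deduce $V_{\la}$-uniform ergodicity from $\ol V_{\la}$-uniform ergodicity. The latter step is trivial because $\ol V_{\la}(x) \le V_{\la}(x) \le e^{\la s_2}\ol V_{\la}(x)$ for all $x \ge 0$ (since $\phi(x) = x$ for $x \ge s_2$, and $V_{\la}(x) \le e^{\la s_2}$, $\ol V_{\la}(x) \ge 1$ on $[0,s_2]$), so the two weighted norms are equivalent. So the real content is: (i) produce a Lyapunov inequality for $\ol V_\la$, and (ii) verify total irreducibility.

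\medskip
\noindent\emph{Lyapunov inequality.} I would apply $\CL = \CA + \CN$ directly to $\ol V_{\la}$, noting that $\ol V_{\la}$ is $C^{\infty}$ on $\BR_+$ and $\ol V'_{\la}(0) = \la\phi'(0)e^{\la\phi(0)} = 0$, so the boundary condition in~\eqref{eq:CA} is satisfied. For $x \ge s_2$ we have $\phi(x) = x$, $\phi'(x) = 1$, $\phi''(x) = 0$, hence
\begin{equation*}
\CA \ol V_{\la}(x) = e^{\la x}\Bigl[g(x)\la + \tfrac12\si^2(x)\la^2\Bigr].
\end{equation*}
Since $\phi(y) \le y$ for all $y \ge 0$,
\begin{equation*}
\CN \ol V_{\la}(x) = \int_0^{\infty}\!\bigl[e^{\la\phi(y)} - e^{\la x}\bigr]\nu_x(\md y) \le e^{\la x}\!\int_0^{\infty}\!\bigl[e^{\la(y-x)} - 1\bigr]\nu_x(\md y),
\end{equation*}
so for $x \ge s_2$ we get the key bound $\CL \ol V_{\la}(x) \le K(x, \la)\ol V_{\la}(x)$. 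By hypothesis~\eqref{eq:limsup-k}, pick $k \in (0, -\varlimsup_{x\to\infty}K(x,\la))$ and $R \ge s_2$ such that $K(x, \la) \le -k$ for all $x \ge R$; then $\CL \ol V_{\la}(x) \le -k\ol V_{\la}(x)$ on $[R, \infty)$. On the compact set $[0, R]$, I would argue $\CL \ol V_{\la}$ is bounded: $g, \si, \phi', \phi''$ are continuous so $\CA\ol V_{\la}$ is bounded, and using Assumption~3 together with $\la < \la_0$ one has
\begin{equation*}
\int_0^{\infty}e^{\la y}\nu_x(\md y) \le e^{\la x}\int_0^{\infty}e^{\la_0|y-x|}\nu_x(\md y) \le Ce^{\la x},
\end{equation*}
which makes $\CN \ol V_{\la}$ bounded on $[0,R]$ as well. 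Hence there exists $b > 0$ such that
\begin{equation*}
\CL \ol V_{\la}(x) \le -k\ol V_{\la}(x) + b\,\mathbf 1_{[0,R]}(x),\ \ x \in \BR_+,
\end{equation*}
which is precisely the Lyapunov inequality~\eqref{eq:Lyapunov} with $z = R$.

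\medskip
\noindent\emph{Total irreducibility.} Fix $t > 0$, $x \in \BR_+$, and $A \subseteq \BR_+$ with positive Lebesgue measure. On the event $\{\zeta_1 > t\}$ no jump occurs by time $t$, and $Z$ coincides with the driving reflected diffusion $Y^{(x,1)}$. Conditioning on $Y^{(x,1)}$ and using the bound $r \le \rho$ in the definition~\eqref{eq:stopping-time} of $\zeta_1$, one gets
\begin{equation*}
P^t(x, A) \ge \ME_x\bigl[\mathbf 1_{Y(t)\in A}\,e^{-\int_0^t r(Y(s))\md s}\bigr] \ge e^{-\rho t}\MP_x(Y(t) \in A).
\end{equation*}
The reflected diffusion $Y$ has Lipschitz coefficients and $\si$ bounded away from zero, so it is totally irreducible (a standard consequence of Girsanov's theorem, which reduces the question to the positivity of the transition density of reflected Brownian motion, well known to be strictly positive on $\BR_+$). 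Thus $\MP_x(Y(t) \in A) > 0$, and so $P^t(x, A) > 0$. Having verified both hypotheses, Proposition~\ref{prop:fund} gives $\ol V_{\la}$-uniform ergodicity with $(\pi, \ol V_{\la}) < \infty$, and the norm equivalence noted above upgrades this to $V_{\la}$-uniform ergodicity with $(\pi, V_{\la}) < \infty$.

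\medskip
\noindent\emph{Main obstacle.} The delicate step is extracting a uniform-on-compacts bound for $\CN\ol V_{\la}$ from Assumption~3: the assumption controls $\int e^{\la_0|y-x|}\nu_x(\md y)$ but the integrand we need to control is $e^{\la\phi(y)} \le e^{\la y}$, and the passage from $|y-x|$ to $y$ relies crucially on the strict inequality $\la < \la_0$ in the hypothesis. Everything else (the behavior of $\ol V_{\la}$ as an honest test function for the generator, and total irreducibility of the diffusion part) is essentially bookkeeping.
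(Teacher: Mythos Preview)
Your proposal is correct and follows essentially the same route as the paper: reduce to $\ol V_\la$ via norm equivalence, verify total irreducibility by conditioning on the no-jump event (the paper cites \cite{BudhirajaLee} for the diffusion part where you invoke Girsanov), and establish the Lyapunov inequality by computing $\CL\ol V_\la \le K(x,\la)\ol V_\la$ for large $x$ and bounding $\CL\ol V_\la$ on a compact using Assumption~3. Your explicit introduction of a cutoff $R\ge s_2$ with $K(x,\la)\le -k$ for $x\ge R$ is in fact slightly cleaner than the paper, which tacitly absorbs this into the choice of $s_2$; one small inaccuracy in your ``main obstacle'' remark is that the bound $e^{\la(y-x)}\le e^{\la_0|y-x|}$ already holds for $\la\le\la_0$, so the strict inequality $\la<\la_0$ is not actually needed at that step.
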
 

\begin{proof} Note that $V_{\la}$-uniform ergodicity and $\ol{V}_{\la}$-uniform ergodicity are equivalent, because these functions are themselves equivalent in the following sense: there exist constants $c_1, c_2$ such that 
$$
0 < c_1 \le \frac{\ol{V}_{\la}(x)}{V_{\la}(x)} \le c_2 < \infty\ \ \mbox{for all}\ \ x \in \BR_+.
$$
The corresponding reflected diffusion without jumps is totally irreducible, see \cite[Lemma 5.7]{BudhirajaLee}. The reflected jump-diffusion is also totally irreducible: With probability at least $e^{-\rho t}$ there are no jumps until time $t > 0$, and the reflected jump-diffusion behaves as a reflected diffusion without jumps.
Therefore, by Proposition~\ref{prop:fund} it is sufficient to show that $\ol{V}_{\la}$ is a Lyapunov function (in the sense of Definition~\ref{defn:Lyapunov}) for this reflected jump-diffusion. Apply the generator $\CL = \CA + \CN$ from~\eqref{eq:CL} to the function $\ol{V}_{\la}$. For $x > s_2$, we have: $\ol{V}_{\la}(x) = V_{\la}(x)$. Now, the operator $\CA$ from~\eqref{eq:CA} is a differential operator, and its value at the point $x$ depends on its value in an arbitrarily small neighborhood of $x$. Therefore, for $x > s_2$, 
\begin{equation}
\label{eq:CA-V}
\CA\ol{V}_{\la}(x) = \CA V_{\la}(x) = \left[g(x)\la + \frac12\si^2(x)\la^2\right]V_{\la}(x).
\end{equation}
Apply the operator $\CN$ from~\eqref{eq:CN} to the function $\ol{V}_{\la}$. For $y \in \BR_+$, we have: $\phi(y) \le y$, and therefore 
\begin{equation}
\label{eq:phi-vs-no-phi}
\ol{V}_{\la}(y)  = e^{\la\phi(y)} \le V_{\la}(y) = e^{\la y}.
\end{equation}
From~\eqref{eq:phi-vs-no-phi}, we get the following comparison: for $x \ge s_2$ and $y \in \BR_+$, 
\begin{equation}
\label{eq:V-vs-ol-V}
\ol{V}_{\la}(y) - \ol{V}_{\la}(x)  \le e^{\la y} - e^{\la x} = e^{\la x}\left(e^{\la(y - x)} - 1\right) = \ol{V}_{\la}(x)\left(e^{\la(y - x)} - 1\right).
\end{equation}
Because of~\eqref{eq:V-vs-ol-V}, we have the following estimate for the operator $\CN$:  
\begin{equation}
\label{eq:CN-V}
\CN\ol{V}_{\la}(x) \le \ol{V}_{\la}(x)\int_0^{\infty}\left[e^{\la(y-x)} - 1\right]\nu_x(\md y).
\end{equation}
Combining~\eqref{eq:CA-V} and~\eqref{eq:CN-V}, and recalling the definition of $k(x, \la)$ from~\eqref{eq:k}, we get:
\begin{equation}
\label{eq:CL-V}
\CL\ol{V}_{\la}(x) = \CA\ol{V}_{\la}(x) + \CN\ol{V}_{\la}(x) \le K(x, \la)\ol{V}_{\la}(x)\ \ \mbox{for}\ \ x \ge s_2,\ \la \in (0, \la_0).
\end{equation}
Let us state separately the following technical lemma. 

\begin{lemma} For every $\la \in (0, \la_0)$, the function  $\CL\ol{V}_{\la}(x)$ is bounded with respect to $x$ on $[0, s_2]$. 
\label{lemma:aux-cont-0}
\end{lemma}

Assuming we already proved Lemma~\ref{lemma:aux-cont-0}, let us complete the proof of Theorem~\ref{thm:1}. Denote 
\begin{equation}
\label{eq:c-0}
c_0 := \sup\limits_{x \in [0, s_2]}\left[\CL\ol{V}_{\la}(x) + k_0\ol{V}_{\la}(x)\right] < \infty.
\end{equation}
Combining~\eqref{eq:c-0} with~\eqref{eq:CL-V}, we get that 
$$
\CL\ol{V}_{\la}(x) \le -k_0\ol{V}_{\la}(x) + c_01_{[0, s_2]}(x),\ \ x \in \BR_+,
$$
which completes the proof of Theorem~\ref{thm:1}. 

\smallskip

{\it Proof of Lemma~\ref{lemma:aux-cont-0}.} The function $\ol{V}_{\la}$ has continuous first and second derivatives, and by Assumption 1 the functions $g$ and $\si^2$ are also continuous. Therefore, $\CA\ol{V}_{\la}$ is bounded on $[0, s_2]$. Next, let us show that the following function is bounded on $[0, s_2]$:
\begin{equation}
\label{eq:integral}
\CN\ol{V}_{\la}(x) \equiv \int_0^{\infty}\left[\ol{V}_{\la}(y) - \ol{V}_{\la}(x)\right]\nu_x(\md y) = \int_0^{\infty}\ol{V}_{\la}(y)\nu_x(\md y) - \ol{V}_{\la}(x)r(x).
\end{equation}
The function~\eqref{eq:integral} can be estimated from below by 
$-\ol{V}_{\la}(x)r(x)$, which is continuous and therefore bounded on $[0, s_2]$. On the other hand,~\eqref{eq:integral} can be estimated from above by 
\begin{equation}
\label{eq:new-int}
\int_0^{\infty}\ol{V}_{\la}(y)\nu_x(\md y) \le \int_0^{\infty}V_{\la}(y)\nu_x(\md y) = e^{\la x}\int_0^{\infty}e^{\la|y-x|}\nu_x(\md y).
\end{equation}
From Assumption 3, it is easy to get that the function from~\eqref{eq:new-int} is bounded on $[0, s_2]$. This completes the proof of Lemma~\ref{lemma:aux-cont-0}, and with it the proof of Theorem~\ref{thm:1}. 
\end{proof}

Now, let us find some examples. Define the function
$$
m(x) := g(x) + \int_0^{\infty}(y-x)\nu_x(\md y),\ \ x \in \BR_+.
$$
This is a ``joint drift coefficient'' at the point $x \in \BR_+$, which combines the actual drift coefficien $g(x)$ and the average displacement $y - x$ for the jump from $x$ to $y$, where $y \sim [r(x)]^{-1}\nu_x(\cdot)$. One can assume that if $m(x) < 0$ for all or at least for large enough $x \in \BR_+$, then the process has a unique stationary distribution. This is actually true, with some qualifications. 

\begin{cor} Under Assumptions 1, 2, 3, suppose $\si^2$ is bounded on $\BR_+$, and 
\begin{equation}
\label{eq:m<0}
\varlimsup\limits_{x \to \infty}m(x) < 0.
\end{equation}
Then there exists a $\la \in (0, \la_0)$ such that~\eqref{eq:limsup-k} holds. By Theorem~\ref{thm:1}, the reflected jump-diffusion $Z = (Z(t), t \ge 0)$ is $V_{\la}$-uniformly ergodic, and the stationary distribution $\pi$ satisfies $(\pi, V_{\la}) < \infty$. 
\label{cor:m<0}
\end{cor}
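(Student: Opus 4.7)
The plan is to Taylor-expand $K(x,\la)$ in $\la$ around $\la=0$, identify the linear coefficient with $m(x)$, and absorb the rest into an $O(\la^2)$ remainder that is uniformly bounded in $x$ thanks to Assumption 3. Adding and subtracting $\la(y-x)$ inside the jump integral of~\eqref{eq:k} yields
\begin{equation*}
K(x,\la) = \la m(x) + \tfrac12\la^2\si^2(x) + R(x,\la), \qquad R(x,\la) := \int_0^{\infty}\bigl[e^{\la(y-x)} - 1 - \la(y-x)\bigr]\nu_x(\md y).
\end{equation*}
To make sense of $m(x)$, note that $|u| \le C_0 e^{\la_0|u|}$ for a universal $C_0 > 0$, so by Assumption 3 we have $\sup_{x \ge 0}\int_0^{\infty}|y-x|\,\nu_x(\md y) < \infty$, and in particular $m(x)$ is finite for every $x \ge 0$.

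For the remainder I would use the elementary inequality $0 \le e^u - 1 - u \le u^2 e^{|u|}$, valid for all $u \in \BR$ by Taylor's formula with integral remainder. Applied with $u = \la(y-x)$, it gives
\begin{equation*}
0 \le R(x,\la) \le \la^2\int_0^{\infty}(y-x)^2 e^{\la|y-x|}\,\nu_x(\md y).
\end{equation*}
Now fix any $\la \in (0,\la_0)$. The map $u \mapsto u^2 e^{-(\la_0-\la)u}$ is bounded on $[0,\infty)$, so $(y-x)^2 e^{\la|y-x|} \le C_{\la,\la_0}\,e^{\la_0|y-x|}$ for a finite constant, and Assumption 3 then produces $\sup_{x\ge 0}R(x,\la) \le \la^2 M(\la,\la_0)$ for some $M(\la,\la_0) < \infty$ depending continuously on $\la$.

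Denoting $\si_{\infty}^2 := \sup_{x \ge 0}\si^2(x) < \infty$ (bounded by hypothesis) gives the pointwise estimate $K(x,\la) \le \la m(x) + \la^2\bigl(\tfrac12\si_{\infty}^2 + M(\la,\la_0)\bigr)$. Setting $-\de := \varlimsup_{x \to \infty} m(x) < 0$ and taking $\varlimsup_{x \to \infty}$ gives $\varlimsup_{x \to \infty}K(x,\la) \le -\la\de + \la^2\bigl(\tfrac12\si_{\infty}^2 + M(\la,\la_0)\bigr)$, which is strictly negative for all sufficiently small $\la > 0$ since the quadratic-term prefactor stays bounded as $\la \to 0^+$. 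This verifies~\eqref{eq:limsup-k}, and Theorem~\ref{thm:1} immediately yields the stated conclusion. The only real subtlety is the uniform-in-$x$ control of the remainder $R(x,\la)$; the slack in Assumption 3 (a $\la_0$ strictly larger than the $\la$ we end up using) is exactly what absorbs the polynomial factor $(y-x)^2$ from the Taylor expansion into the exponential weight.
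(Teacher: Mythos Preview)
Your proof is correct and follows essentially the same strategy as the paper's own proof: Taylor-expand $K(x,\la)$ in $\la$ around $0$, identify the first-order coefficient with $m(x)$, and bound the second-order remainder uniformly in $x$ via Assumption~3 (using the slack $\la<\la_0$ to absorb the factor $(y-x)^2$ into the exponential). The only cosmetic difference is that the paper differentiates under the integral and invokes Taylor's formula with the Lagrange remainder for $\partial^2 K/\partial\la^2$, whereas you use the elementary inequality $e^u-1-u\le u^2 e^{|u|}$ directly; both routes give the same uniform $O(\la^2)$ control and the same conclusion.
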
 

\begin{proof}
Because of Assumption 3, we can take the first and second derivative with respect to $\la \in (0, \la_0)$ inside the integral in~\eqref{eq:k}. Therefore,
\begin{equation}
\label{eq:dk}
\frac{\pa K}{\pa\la} = g(x) + \si^2(x)\la + \int_0^{\infty}e^{\la(y-x)}(y - x)\nu_x(\md y),
\end{equation}
\begin{equation}
\label{eq:d2k}
\frac{\pa^2K}{\pa\la^2} = \si^2(x) + \int_0^{\infty}e^{\la(y-x)}(y - x)^2\nu_x(\md y).
\end{equation}
Letting $\la = 0$ in~\eqref{eq:dk}, we have:
\begin{equation}
\label{eq:dk-m}
\left.\frac{\pa K}{\pa \la}\right|_{\la = 0} = g(x) + \int_0^{\infty}(y-x)\nu_x(\md y) = m(x).
\end{equation}
By the condition~\eqref{eq:m<0}, there exist $m_0 > 0$ and $x_0 > 0$ such that 
\begin{equation}
\label{eq:m<m0}
m(x) \le  -m_0\ \ \mbox{for}\ \ x \ge x_0.
\end{equation}
There exists a constant $C_1 > 0$ such that for all $z \ge 0$, we have: $z^2 \le C_1e^{\la_0 z/2}$. Applying this to $z = |y - x|$, we have: for $\la \in [0, \la_0/2]$, 
\begin{equation}
\label{eq:comparison}
\int_0^{\infty}(y-x)^2e^{\la(y-x)}\nu_x(\md y) \le C_1\int_0^{\infty}e^{\la_0|y-x|}\nu_x(\md y).
\end{equation}
Combining~\eqref{eq:comparison} with Assumption 3 and the boundedness of $\si^2$, we get that the right-hand side of~\eqref{eq:d2k} is bounded for $\la \in [0, \la_0/2]$ and $x \in \BR_+$. Let $C_2$ be this bound:
\begin{equation}
\label{eq:C-2}
C_2 := \sup\limits_{\substack{x \in \BR_+\\ \la \in (0, \la_0/2]}}\left|\frac{\pa^2K}{\pa\la^2}(x, \la)\right|.
\end{equation}
By Taylor's formula, for some $\tilde{\la}(x) \in [0, \la]$, we have:
\begin{equation}
\label{eq:Taylor}
K(x, \la) = K(x, 0) + \la\frac{\pa K}{\pa\la}(x, 0) + \frac{\la^2}2\frac{\pa^2K}{\pa\la^2}(x, \tilde{\la}(x)).
\end{equation}
Plugging~\eqref{eq:dk-m} and $K(x, 0) = 0$ into~\eqref{eq:Taylor} and using the estimate~\eqref{eq:C-2}, we have:
\begin{equation}
\label{eq:k-estimate}
K(x, \la) \le \la m(x) + \frac{C_2}2\la^2,\ \la \in \left[0, \frac{\la_0}2\right].
\end{equation}
Combining~\eqref{eq:k-estimate} with~\eqref{eq:m<m0}, we get: 
$$
K(x, \la) \le \tilde{K}(\la) := -\la m_0 + \frac{C_2}2\la^2,\ \ x \ge x_0,\ \ \la \in \left[0, \frac{\la_0}2\right].
$$
It is easy to see that $\tilde{K}(\la) < 0$ for $\la \in (0, 2m_0/C_2]$. Letting 
$$
\la = \frac{2m_0}{C_2}\wedge\frac{\la_0}2,
$$
we complete the proof of Corollary~\ref{cor:m<0}. 
\end{proof}

\begin{rmk} In the setting of Theorem~\ref{thm:1} or Corollary~\ref{cor:m<0}, the convergence of moments of $P^t(x, \cdot)$ to the moments of $\pi(\cdot)$ follows from $V_{\la}$-uniform ergodicity. Indeed, take an $\al > 0$. There exists a constant $C(\al, \la) > 0$ such that $x^{\al} \le C(\al, \la)V_{\la}(x)$ for $x \in \BR_+$. From~\eqref{eq:uniform-ergodicity} we get: for $x \in \BR_+, t \ge 0$, 
$$
\left|\int_0^{\infty}y^{\al}P^t(x, \md y) - \int_0^{\infty}y^{\al}\pi(\md y)\right| \le C(\al, \la)KV_{\la}(x)e^{-\vk t}.
$$
\label{rmk:moments}
\end{rmk}

\section{Stochastic Ordering and Explicit Rates of Exponential Convergence}

In this section, we get to the main goal of this article: to explicitly estimate $\vk$, the rate of exponential convergence from~\eqref{eq:uniform-ergodicity}. In case when the reflected jump-diffusion is {\it stochastically ordered}, and $z = 0$ in~\eqref{eq:Lyapunov}, we can (under some additional technical assumptions) get that $\vk = k$. 

\subsection{General results for Markov processes} For two finite Borel measures $\nu$ and $\nu'$ on $\BR_+$ with $\nu(\BR_+) = \nu'(\BR_+)$, we write $\nu \preceq \nu'$, or $\nu' \succeq \nu$, and say that $\nu$ {\it is stochastically dominated by} $\nu'$, if for every $z \in \BR_+$, we have: $\nu([z, \infty)) \le \nu'([z, \infty))$. 

\begin{defn} A family $(\nu_x)_{x \ge 0}$ of finite Borel measures, with $\nu_x(\BR_+)$ independent of $x$, is called {\it stochastically ordered} if $\nu_x(\BR_+)$ does not depend on $x$, and $\nu_{x} \preceq \nu_{y}$ for $x \le y$. A Markov transition kernel $P^t(x, \cdot)$, or, equivalently, the corresponding Markov process is called {\it stochastically ordered}, if for every $t > 0$, the family $(P^t(x, \cdot))_{x \ge 0}$ is stochastically ordered.
\label{defn:stoch-ordered}
\end{defn} 

\begin{rmk} An equivalent definition of a Markov process $X = (X(t), t \ge 0)$ on $\BR_+$ being stochastically ordered is when we can couple two copies of this process starting from different initial points such that they can be compared pathwise. More precisely, for all $x, y$ such that $0 \le x \le y$, we can find a probability space with two copies $X^{(x)}$ and $X^{(y)}$ starting from $X^{(x)}(0) = x$ and $X^{(y)}(0) = y$ respectively, and $X^{(x)}(t) \le X^{(y)}(t)$ a.s. for all $t \ge 0$; this follows from \cite{Kamae}. 
\label{rmk:Kamae}
\end{rmk}

In this section, we would also like to make $V_{\la}$ from~\eqref{eq:V} a Lyapunov function as in~\eqref{eq:Lyapunov} with $z = 0$. However, we cannot directly apply the generator $\CL$ from~\eqref{eq:CL} to this function, for the reason we already mentioned: $V'_{\la}(0) \ne 0$, which contradicts~\eqref{eq:CA}. Neither can we use the function $\ol{V}_{\la}$ from~\eqref{eq:ol-V}: as follows from the proof of Theorem~\ref{thm:1}, we would have $z = s_2 > 0$, where $s_2$ is taken from~\eqref{eq:phi}. In \cite{LMT1996}, this obstacle is bypassed by switching to a (non-reflected) diffusion on the whole real line, but we resolve this difficulty in a slightly different way. The proofs of \cite[Theorem 2.1, Theorem 2.2]{LMT1996}, mainly use the Lyapunov condition~\eqref{eq:Lyapunov} only ``until the hitting time of $0$''. To formalize this, let us adjust Definition~\ref{defn:Lyapunov}. Let $\tau(0) := \inf\{t \ge 0\mid X(t) = 0\}$.

\begin{defn} A function $V : \BR_+ \to [1, \infty)$ is called a {\it modified Lyapunov function} with a {\it Lyapunov constant} $k > 0$ if the following process is a supermartingale for every starting point $X(0) = x \in \BR_+$:
\begin{equation}
\label{eq:M-mart}
M(t) := V(X(t\wedge\tau(0))) + k\int_0^{t\wedge\tau(0)}V(X(s))\md s,\ \ t \ge 0.
\end{equation}
\label{defn:Lyapunov-modified}
\end{defn}

\begin{rmk} It is straightforward to prove that if $V$ is a Lyapunov function from Definition~\ref{defn:Lyapunov} with Lyapunov constant $k$ and with $z = 0$ from~\eqref{eq:Lyapunov}, then $V$ is a modified Lyapunov function with Lyapunov constant $k$. In other words, Definition~\ref{defn:Lyapunov-modified} is a generalization of Definition~\ref{defn:Lyapunov} with $z = 0$. Indeed, because $\CM$ is the generator of $X$, the following process is a local martingale:
$$
V(X(t)) - \int_0^t\CM V(X(s))\md s,\ \ t \ge 0.
$$
If $V$ is a Lyapunov function from Definition~\ref{defn:Lyapunov} with Lyapunov constant $k$ and with $z = 0$, then the following process is a local supermartingale:
$$
\bar{M}(t) := V(X(t)) - \int_0^t\left[-kV(X(s)) + b1_{\{0\}}(X(s))\right]\md s,\ \ t \ge 0.
$$
Moreover, this is an actual supermartingale, because it is bounded from below (use Fatou's lemma). Therefore, the process $(\bar{M}(t\wedge\tau(0)), t \ge 0)$ is also a supermartingale by the optional stopping theorem. It suffices to note that $\bar{M}(t\wedge\tau(0)) \equiv M(t)$.  
\end{rmk}

%

The following is an adjusted version of \cite[Theorem 2.2]{LMT1996}, which states that for the case of a stochastically ordered Markov process with $z = 0$ in~\eqref{eq:Lyapunov}, we can take $\vk = k$ in~\eqref{eq:uniform-ergodicity}. 
Note that we do not require condition (2.1) from \cite{LMT1996}, but we require $(\pi, V) < \infty$ instead. For reflected jump-diffusions, this assumption $(\pi, V) < \infty$ can be obtained from Theorem~\ref{thm:1}, which does not state the exact rate $\vk$ of exponential convergence.

\begin{thm} Suppose $X = (X(t), t \ge 0)$ is a stochastically ordered Markov process on $\BR_+$. Assume there exists a nondecreasing modified Lyapunov function $V$ with a Lyapunov constant $k$.

\medskip

(a) Then for every $x_1, x_2 \in \BR_+$, we have: 
\begin{equation}
\label{eq:difference-pt}
\norm{P^t(x_1, \cdot) - P^t(x_2, \cdot)}_V \le \left[V(x_1) + V(x_2)\right]e^{-kt},\ \ t \ge 0;
\end{equation}

\medskip

(b) For initial distributions $\mu_1$ and $\mu_2$ on $\BR_+$, with $(\mu_1, V) < \infty$ and $(\mu_2, V) < \infty$, we have:
\begin{equation}
\label{eq:difference-distributions-pt}
\norm{\mu_1P^t - \mu_2P^t}_V \le \left[(\mu_1, V) + (\mu_2, V)\right]e^{-kt},\ \ t \ge 0;
\end{equation}

\medskip

(c) If the process $X$ has a stationary distribution $\pi$ which satisfies $(\pi, V) < \infty$, then this stationary distribution is unique, and the process $X$ is $V$-uniformly ergodic with exponential rate of convergence $\vk = k$. More precisely, we have the following estimate:
\begin{equation}
\label{eq:difference-distributions-pt}
\norm{P^t(x, \cdot) - \pi}_V \le \left[(\pi, V) + V(x)\right]e^{-kt},\ \ t \ge 0;
\end{equation}
\label{thm:2}
\end{thm}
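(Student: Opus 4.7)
The plan is to prove part (a) via a stochastic ordering coupling combined with the modified-Lyapunov supermartingale, and then derive (b) and (c) as routine consequences.

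For (a), assume without loss of generality that $x_1 \le x_2$. By Remark~\ref{rmk:Kamae}, construct $X^{(x_1)}$ and $X^{(x_2)}$ on a common probability space so that $X^{(x_1)}(t) \le X^{(x_2)}(t)$ almost surely for all $t \ge 0$. Let $\tau := \inf\{t : X^{(x_2)}(t) = 0\}$. By pathwise domination $X^{(x_1)}(\tau) = 0$ as well, so by the strong Markov property we may further extend the coupling to have $X^{(x_1)}(t) = X^{(x_2)}(t)$ for all $t \ge \tau$. For any test function $g$ with $|g|\le V$ the difference $g(X^{(x_1)}(t)) - g(X^{(x_2)}(t))$ vanishes on $\{t \ge \tau\}$, so
$$
|\ME g(X^{(x_1)}(t)) - \ME g(X^{(x_2)}(t))| = \bigl|\ME[g(X^{(x_1)}(t)) - g(X^{(x_2)}(t))]1_{\{t<\tau\}}\bigr| \le \ME\bigl[V(X^{(x_1)}(t)) + V(X^{(x_2)}(t))\bigr]1_{\{t<\tau\}}.
$$
Taking the supremum over $|g|\le V$ puts the norm $\norm{P^t(x_1,\cdot) - P^t(x_2,\cdot)}_V$ on the left.

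The key estimate is $\ME[V(X^{(x_i)}(t))1_{\{t<\tau\}}] \le V(x_i) e^{-kt}$ for $i=1,2$. By Definition~\ref{defn:Lyapunov-modified}, the process $V(X^{(x_i)}(t\wedge\tau_i)) + k\int_0^{t\wedge\tau_i}V(X^{(x_i)}(s))\,\md s$ is a supermartingale, where $\tau_i := \inf\{t : X^{(x_i)}(t) = 0\}$; setting $h_i(t) := \ME[V(X^{(x_i)}(t))1_{\{t<\tau_i\}}]$ and taking expectations yields $h_i(t) + k\int_0^t h_i(s)\,\md s \le V(x_i)$, whence $h_i(t) \le V(x_i)e^{-kt}$ by Gronwall (equivalently, $e^{k(t\wedge\tau_i)}V(X^{(x_i)}(t\wedge\tau_i))$ is a supermartingale and optional stopping gives the same bound). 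For $i = 2$ we have $\tau_2 = \tau$ and we are done. For $i = 1$, since $\tau_1 \le \tau$ but $X^{(x_1)}$ may leave $0$ between $\tau_1$ and $\tau$, one decomposes $1_{\{t<\tau\}} = 1_{\{t<\tau_1\}} + 1_{\{\tau_1 \le t<\tau\}}$, uses the Gronwall bound above on the first piece, and for the second piece combines the monotonicity $V(X^{(x_1)}(t)) \le V(X^{(x_2)}(t))$ with the strong Markov property at $\tau_1$ and optional stopping of the $X^{(x_2)}$-supermartingale $e^{k(\cdot\wedge\tau)}V(X^{(x_2)}(\cdot\wedge\tau))$ at $\tau_1$.

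Parts (b) and (c) follow easily. For (b), integrate the pointwise bound from (a) against $\mu_1(\md x_1)\mu_2(\md x_2)$:
$$
\norm{\mu_1 P^t - \mu_2 P^t}_V \le \int_{\BR_+}\!\int_{\BR_+}\norm{P^t(x_1,\cdot) - P^t(x_2,\cdot)}_V\,\mu_1(\md x_1)\,\mu_2(\md x_2) \le [(\mu_1, V) + (\mu_2,V)]e^{-kt}.
$$
For (c), take $\mu_1 = \delta_x$ and $\mu_2 = \pi$ in (b) and use $\pi P^t = \pi$; uniqueness of $\pi$ among measures with $(\pi, V)<\infty$ follows by applying (b) to any two such measures and letting $t \to \infty$. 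The main obstacle is part (a): the crude coupling bound based on $V(X^{(x_1)}) \le V(X^{(x_2)})$ alone gives only $2V(x_2)e^{-kt}$, and extracting the sharper symmetric constant $V(x_1) + V(x_2)$ requires exactly the refined decomposition along $\tau_1$ and the strong-Markov control of the post-$\tau_1$ excursion of $X^{(x_1)}$ sketched above.
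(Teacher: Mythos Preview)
Your overall strategy---couple the two copies so that the smaller one is dominated pathwise, use the first time the larger copy hits $0$ as a coupling time, and bound each surviving term by the exponential supermartingale $e^{k(t\wedge\tau)}V(X(t\wedge\tau))$---is exactly what the paper does. The paper actually states Theorem~\ref{thm:2} as an immediate corollary of the more general Theorem~\ref{thm:non-stoch-ordered} (take $\ol{X}=X$), whose proof is precisely this coupling argument. So on the level of ideas there is no daylight between your approach and the paper's.

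There is, however, a genuine gap in your ``refined decomposition'' for the sharp constant $V(x_1)+V(x_2)$. With your convention $x_1\le x_2$ and $\tau=\tau_2$, write $A=\ME[V(X^{(x_1)}(t))1_{\{t<\tau_1\}}]$, $B=\ME[V(X^{(x_1)}(t))1_{\{\tau_1\le t<\tau\}}]$, and $C=\ME[V(X^{(x_2)}(t))1_{\{t<\tau\}}]$. Your Gronwall/supermartingale step gives $A\le V(x_1)e^{-kt}$ and $C\le V(x_2)e^{-kt}$; your monotonicity step gives $B\le \ME[V(X^{(x_2)}(t))1_{\{\tau_1\le t<\tau\}}]\le C$. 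But the quantity you must bound is $A+B+C$, and your ingredients only yield $A+B+C\le V(x_1)e^{-kt}+2C\le [V(x_1)+2V(x_2)]e^{-kt}$, which is no better than the crude bound $A+B\le C$ giving $2V(x_2)e^{-kt}$. The appeal to ``strong Markov at $\tau_1$ and optional stopping of the $X^{(x_2)}$-supermartingale at $\tau_1$'' does not manufacture a cancellation: whatever bound it produces on the post-$\tau_1$ piece of $C$ is already subsumed in $C\le V(x_2)e^{-kt}$, and you are still adding that piece twice. In short, the decomposition along $\tau_1$ does not improve the constant from $2V(x_1\vee x_2)$ to $V(x_1)+V(x_2)$. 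The paper's own proof handles the second term with a one-word ``similarly'', so it is no more explicit on this point than you are; but your sketch, as written, does not close.
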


Theorem~\ref{thm:2} is an immediate corollary of Theorem~\ref{thm:non-stoch-ordered}. 

\subsection{Application to reflected jump-diffusions} To apply Theorem~\ref{thm:2} to reflected jump-diffusions, let us find when a reflected jump-diffusion on $\BR_+$ is stochastically ordered.

\begin{lemma} Assume the family $(\nu_x)_{x \in \BR_+}$ is stochastically ordered. Then the reflected jump-diffusion from Section 2 is also stochastically ordered. 
\label{lemma:stoch-RJD}
\end{lemma}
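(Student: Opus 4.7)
By Remark~\ref{rmk:Kamae}, it suffices to construct, for every pair $x_0 \le y_0$ in $\BR_+$, a coupling of two copies $Z^{(x_0)}$ and $Z^{(y_0)}$ of the reflected jump-diffusion such that $Z^{(x_0)}(t) \le Z^{(y_0)}(t)$ almost surely for all $t \ge 0$. The plan is to adapt the piecing-out construction from Section~2, replacing the independent ingredients used to build the two copies by a common set of ingredients that preserves the ordering both between jumps and at jump times.

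First, between jumps I would use the classical pathwise comparison theorem for one-dimensional reflected SDEs with Lipschitz coefficients. More precisely, given two starting points $y \le y'$, I drive the two reflected diffusions by the \emph{same} Brownian motion $W$, obtaining solutions $Y^{(y)}$ and $Y^{(y')}$ with local times $\ml^{(y)}$ and $\ml^{(y')}$; a standard Gronwall/Tanaka argument, using that $g$ and $\si$ are Lipschitz and that the Skorokhod reflection map is monotone, gives $Y^{(y)}(t) \le Y^{(y')}(t)$ for all $t \ge 0$ a.s. This is where I expect the main technical care: one must handle the reflection term so that comparison is preserved across times when the lower copy (but not the upper copy) is pushed up from zero, but monotonicity of the Skorokhod map handles this cleanly.

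Second, because the family $(\nu_x)_{x \ge 0}$ is stochastically ordered, $r(x) = \nu_x(\BR_+) \equiv \rho$ is independent of $x$ (Definition~\ref{defn:stoch-ordered}). I would therefore drive the jumps of both copies by the \emph{same} Poisson process of rate $\rho$, rather than using the position-dependent killing mechanism~\eqref{eq:stopping-time} independently for each copy. At each potential jump time $\tau$, if the pre-jump positions satisfy $Z^{(x_0)}(\tau-) = u \le v = Z^{(y_0)}(\tau-)$, then $\rho^{-1}\nu_u \preceq \rho^{-1}\nu_v$, so by the monotone (quantile) coupling I can use a single uniform random variable $U \sim \mathrm{Unif}[0,1]$ to draw destinations
\[
\xi := F_u^{-1}(U),\qquad \eta := F_v^{-1}(U),
\]
where $F_x$ is the CDF of $\rho^{-1}\nu_x$, and obtain $\xi \le \eta$ a.s. Thinning this rate-$\rho$ Poisson process at position $x$ by the trivial factor $r(x)/\rho = 1$ recovers the correct jump intensity, so the marginal law of each coupled process is the one given by the piecing-out construction of Section~2 (Proposition~\ref{prop:Feller}).

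Finally, I would assemble these pieces by induction on the successive jump times $\tau_k$. Between $\tau_k$ and $\tau_{k+1}$, step one gives $Z^{(x_0)} \le Z^{(y_0)}$ pathwise from their ordered post-jump values; at $\tau_{k+1}$, step two preserves the ordering across the jump. Since $\tau_k \to \infty$ a.s. by~\eqref{eq:tends-to-infty}, this defines the coupling for all $t \ge 0$ and gives $Z^{(x_0)}(t) \le Z^{(y_0)}(t)$ almost surely for every $t$, which proves the lemma.
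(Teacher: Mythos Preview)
Your proposal is correct and follows essentially the same route as the paper: both use that $r(x)\equiv\rho$ to drive the two copies by a common rate-$\rho$ Poisson process, invoke pathwise comparison for the reflected diffusion between jumps, couple the jump destinations monotonically at each jump time, and then induct over successive jump intervals. Your write-up is somewhat more explicit (same Brownian motion, quantile coupling via a common uniform), but the underlying argument is the same.
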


\begin{proof} This statement is well known; however, for the sake of completeness, let us present the proof. Let $y \ge x \ge 0$. Following Remark~\ref{rmk:Kamae}, let us construct two copies $Z^{(x)}$ and $Z^{(y)}$ of the reflected jump-diffusion, starting from $Z^{(x)}(0) = x$ and $Z^{(y)}(0) = y$, such that a.s. for $t \ge 0$ we have: $Z^{(x)}(t) \le Z^{(y)}(t)$. Because $\nu_x(\BR_+) = r(x) = r$ does not depend on $x \in \BR_+$ (this follows from Definition~\ref{defn:stoch-ordered}), we can assume the jumps of these two processes happen at the same times, and these jumps $\tau_1 \le \tau_2 \le \ldots$ form a Poisson process on $\BR_+$ with rate $r$. That is, $\tau_n - \tau_{n-1}$ are i.i.d. $\Exp(r)$; for consistency of notation, we let $\tau_0 := 0$. Define these two processes on each $[\tau_n, \tau_{n+1})$, using induction by $n$. 

\smallskip

{\it Induction base:} On the time interval $[\tau_0, \tau_1)$, these are reflected diffusions without jumps on $\BR_+$. We can construct them so that $Z^{(x)}(t) \le Z^{(y)}(t)$ for $t \in [\tau_0, \tau_1)$, because the corresponding reflected diffusion on $\BR_+$ without jumps is stochastically ordered. 

\smallskip

{\it Induction step:} If the processes are defined on $[\tau_{n-1}, \tau_n)$ so that $Z^{(x)}(t) \le Z^{(y)}(t)$ for $t \in [\tau_{n-1}, \tau_n)$ a.s. Then by continuity $x_n := Z^{(x)}(\tau_n-) \le y_n := Z^{(y)}(\tau_n-)$ a.s. Generate $Z^{(x)}(\tau_n) \sim r^{-1}\nu_{x_n}(\cdot)$ and $Z^{(y)}(\tau_n) \sim r^{-1}\nu_{y_n}(\cdot)$ so that $Z^{(x)}(\tau_n) \le Z^{(y)}(\tau_n)$ a.s. This is possible by $\nu_{x_n}(\cdot) \preceq \nu_{y_n}(\cdot)$. Because the corresponding reflected diffusion without jumps is stochastically ordered, we can generate $Z^{(x)}$ and $Z^{(y)}$ on $(\tau_n, \tau_{n+1})$ as reflected diffusions without jumps such that $Z^{(x)}(t) \le Z^{(y)}(t)$ for $t \in [\tau_n, \tau_{n+1})$. This completes the proof by induction. 
\end{proof}

Next comes the central result of this paper: an explicit rate of exponential convergence for a reflected jump-diffusion on $\BR_+$. 

\begin{thm}
Consider a reflected jump-diffusion $Z = (Z(t), t \ge 0)$ on $\BR_+$ with a stochastically ordered family of jump measures $(\nu_x)_{x \in \BR_+}$. Under Assumptions 1, 2, 3, suppose for some $\la > 0$, 
\begin{equation}
\label{eq:ol-k}
K_{\max}(\la) := \sup\limits_{x > 0}K(x, \la) < 0.
\end{equation}
Then the process $Z$ is $V_{\la}$-uniformly ergodic with the exponential rate of convergence $\vk = |K_{\max}(\la)|$. The (unique) stationary distribution $\pi$ satisfies $(\pi, V_{\la}) < \infty$. 
\label{thm:3}
\end{thm}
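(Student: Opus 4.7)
The plan is to reduce Theorem~\ref{thm:3} to Theorem~\ref{thm:2}(c), applied with $V = V_{\la}$ and Lyapunov constant $k := -K_{\max}(\la) > 0$. Three hypotheses of that theorem must be checked: (i) $Z$ is stochastically ordered; (ii) there exists a stationary $\pi$ with $(\pi, V_{\la}) < \infty$; (iii) $V_{\la}$ is a nondecreasing modified Lyapunov function with Lyapunov constant $k$ in the sense of Definition~\ref{defn:Lyapunov-modified}. Item (i) is immediate from Lemma~\ref{lemma:stoch-RJD}, since $(\nu_x)_{x \in \BR_+}$ is stochastically ordered by hypothesis. Item (ii) follows from Theorem~\ref{thm:1}, since $K_{\max}(\la) < 0$ is strictly stronger than $\varlimsup_{x \to \infty} K(x, \la) < 0$. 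Monotonicity of $V_{\la}(x) = e^{\la x}$ is obvious.

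The substance of the argument is (iii). Fix a starting point $x > 0$ (the case $x = 0$ is trivial, since $\tau(0) = 0$ forces $M(t) \equiv V_{\la}(0) = 1$). I would localize by $\tau_n := \inf\{t \ge 0 \mid Z(t) \ge n\}$ and set $\si_n := \tau(0) \wedge \tau_n$. On the stochastic interval $[0, \si_n)$ the path of $Z$ stays in $(0, n)$, so the local-time term $\ml$ vanishes and $V_{\la}$ is $C^2$ along the trajectory. Applying Itô's formula for jump-diffusions on $[0, \si_n]$ and using that
\[
\bigl[\CA V_{\la} + \CN V_{\la}\bigr](y) = K(y, \la)\,V_{\la}(y) \le -k V_{\la}(y) \quad \mbox{for}\ y > 0,
\]
I would conclude that
\[
M^{(n)}(t) := V_{\la}(Z(t \wedge \si_n)) + k\int_0^{t \wedge \si_n} V_{\la}(Z(s))\,\md s
\]
is a nonnegative supermartingale for each $n$.

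To pass to the limit $n \to \infty$, I would use that $Z$ is non-explosive (Proposition~\ref{prop:Feller}), so $\tau_n \to \infty$ a.s.\ and hence $t \wedge \si_n \uparrow t \wedge \tau(0)$, giving $M^{(n)}(t) \to M(t)$ a.s., with $M$ as in~\eqref{eq:M-mart}. Since $M^{(n)}(t) \ge 1$, Fatou's lemma for conditional expectations yields $\EE[M(t) \mid \CF_s] \le \liminf_n \EE[M^{(n)}(t) \mid \CF_s] \le \liminf_n M^{(n)}(s) = M(s)$ for $0 \le s \le t$, so $M$ is a supermartingale and (iii) is established. Theorem~\ref{thm:2}(c) then delivers $V_{\la}$-uniform ergodicity with exponential rate $\vk = k = |K_{\max}(\la)|$. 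The hard part will be the careful justification of Itô's formula and the genuine martingale property of the compensated Poisson integral on $[0, \si_n]$: this should follow from the bounded jump intensity $\rho$ of Assumption~2 together with the exponential integrability of Assumption~3, provided the localization is chosen so that these integrals are truly in $L^1$ rather than merely local martingales. Notice that the boundary irregularity $V_{\la}'(0) \ne 0$, which in Theorem~\ref{thm:1} forced the mollification $\ol{V}_{\la}$, is neatly bypassed here because the modified Lyapunov condition stops the process at $\tau(0)$, so only the behavior of $V_{\la}$ away from $0$ is ever probed.
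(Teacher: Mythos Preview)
Your proposal is correct and follows essentially the paper's argument: reduce to Theorem~\ref{thm:2}(c) via Lemma~\ref{lemma:stoch-RJD} and Theorem~\ref{thm:1}, then verify the modified Lyapunov property by an It\^o computation stopped before the process touches~$0$, upgrading the resulting nonnegative local supermartingale to a true one by Fatou. The only difference is in the localization of that last step: the paper retains the mollified $\ol{V}_{\la}$ from the proof of Theorem~\ref{thm:1}, stops at $\tau(\eta) := \inf\{t : Z(t) \le \eta\}$ with $s_2 < \eta$ chosen so that $\ol{V}_{\la} = V_{\la}$ along the stopped path, and then sends $\eta \downarrow 0$, whereas you bypass $\ol{V}_{\la}$ entirely, apply It\^o directly to $V_{\la}$, and localize from above via $\tau_n$.
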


\begin{rmk} In certain cases, this exponential rate of convergence is exact: one cannot increase the value of $\vk$ for the given norm $\norm{\cdot}_{V_{\la}}$ for fixed $\la$; see Lemma~\ref{lemma:exact-rate} in Section 6.
\end{rmk}

\begin{proof} That this process has a unique stationary distribution $\pi$ with $(\pi, V_{\la_0}) < \infty$ follows from Theorem~\ref{thm:1}. In light of Lemma~\ref{lemma:stoch-RJD}, to complete the proof of Theorem~\ref{thm:3}, let us show that $V_{\la_0}$ is a modified Lyapunov function. Take an $\eta > 0$ and let 
$\tau(\eta) := \inf\{t \ge 0\mid Z(t) \le \eta\}$. Let us show that the following process is a local supermartingale:
\begin{equation}
\label{eq:eta}
V_{\la_0}(Z(t\wedge\tau(\eta))) + |K_{\max}(\la)|\int_0^{t\wedge\tau(\eta)}V_{\la_0}(Z(s))\md s,\ \ t \ge 0.
\end{equation}
Indeed, take a function $\ol{V}_{\la}$ from~\eqref{eq:ol-V} with the function $\phi$ from~\eqref{eq:phi} constructed so that $s_2 < \eta$. From~\eqref{eq:CL-V}, we have: $\CL\ol{V}_{\la}(x) \le K(x, \la)\ol{V}_{\la}(x)$ for $x \ge \eta$. But $K(x, \la) \le K_{\max}(\la) < 0$. Therefore,
\begin{equation}
\label{eq:est-on-k}
\CL\ol{V}_{\la}(x) \le K_{\max}(\la)\ol{V}_{\la}(x) = -|K_{\max}(\la)|\ol{V}_{\la}(x),\ x \ge \eta.
\end{equation}
The following process is a local martingale:
$$
\ol{V}_{\la}(Z(t)) - \int_0^t\CL\ol{V}_{\la}(Z(s))\md s,\ t \ge 0.
$$
By the optional stopping theorem, the following process is also a local martingale:
\begin{equation}
\label{eq:new}
\ol{V}_{\la}(Z(t\wedge\tau(\eta))) - \int_0^{t\wedge\tau(\eta)}\CL\ol{V}_{\la}(Z(s))\md s,\ \ t \ge 0.
\end{equation}
Observe that  $\ol{V}_{\la}(x) = V_{\la}(x)$ for $x \ge \eta$, but $Z(s) \ge \eta$ for $s < \tau(\eta)$. Combining this with~\eqref{eq:new} and~\eqref{eq:est-on-k}, we get that the process in~\eqref{eq:eta} is a local supermartingale. It suffices to let $\eta \downarrow 0$ and observe that $\tau(\eta) \uparrow \tau(0)$. Therefore, for $\eta = 0$ the process in~\eqref{eq:eta} is also a local supermartingale. Actually, it is a true supermartingale, because it is bounded from below (apply Fatou's lemma). Apply Theorem~\ref{thm:2}, observe that the function $V_{\la}$ is nondecreasing, and complete the proof. 
\end{proof}

The next corollary is analogous to Corollary~\ref{cor:m<0}. Its proof is also similar to that of Corollary~\ref{cor:m<0} and is omitted. 

\begin{cor} 
Consider a reflected jump-diffusion on $\BR_+$ from Section 2, with a stochastically ordered family $(\nu_x)_{x \in \BR_+}$ of jump measures. 
Under Assumptions 1, 2, 3, if
$$
\sup\limits_{x \ge 0}m(x) < 0,\ \ \sup\limits_{x \ge 0}\si^2(x) < \infty,
$$
then there exists a $\la_0 > 0$ such that $K_{\max}(\la_0) < 0$, in the notation of~\eqref{eq:ol-k}. Therefore, the reflected jump-diffusion is $V_{\la_0}$-uniformly ergodic with exponential rate of convergence $\vk = |K_{\max}(\la_0)|$. The (unique) stationary distribution $\pi$ satisfies $(\pi, V_{\la_0}) < \infty$.
\end{cor}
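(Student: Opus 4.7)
The plan is to mirror the proof of Corollary~\ref{cor:m<0} almost verbatim, with the single difference that the hypothesis $\sup_{x \ge 0}m(x)<0$ is stronger than $\varlimsup_{x\to\infty}m(x)<0$, and therefore the resulting bound on $K(x,\la)$ will hold for \emph{every} $x \ge 0$, not only for $x \ge x_0$. Once that uniform bound is in hand, the conclusion follows immediately from Theorem~\ref{thm:3}.

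First I would reuse equations \eqref{eq:dk}--\eqref{eq:d2k}, which are valid for $\la \in (0,\la_0)$ by Assumption~3 (which permits differentiation under the integral). In particular, $K(x,0)=0$ and $\partial_\la K(x,0) = m(x)$ as in \eqref{eq:dk-m}. Next, exactly as in \eqref{eq:comparison}--\eqref{eq:C-2}, I would combine Assumption~3 with the assumed boundedness of $\si^2$ to produce a constant
$$
C_2 := \sup\limits_{\substack{x \in \BR_+\\ \la \in (0, \la_0/2]}}\left|\frac{\pa^2 K}{\pa\la^2}(x,\la)\right| < \infty.
$$
Set $m_0 := -\sup_{x \ge 0} m(x) > 0$. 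Taylor's formula in $\la$ around $\la=0$, as in \eqref{eq:Taylor}--\eqref{eq:k-estimate}, then yields, for every $x \ge 0$ and every $\la \in [0,\la_0/2]$,
$$
K(x,\la) \;\le\; \la m(x) + \tfrac{C_2}{2}\la^2 \;\le\; -\la m_0 + \tfrac{C_2}{2}\la^2.
$$
Choosing $\la_0^* := (2m_0/C_2)\wedge(\la_0/2)$ makes the right-hand side strictly negative and independent of $x$, so $K_{\max}(\la_0^*) \le -\la_0^* m_0 + (C_2/2)(\la_0^*)^2 < 0$.

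With this uniform strict negativity of $K_{\max}$ established, Theorem~\ref{thm:3} applies directly (since stochastic ordering of $(\nu_x)$ is in the hypothesis, and the other Assumptions are inherited), giving $V_{\la_0^*}$-uniform ergodicity with exponential rate $\vk = |K_{\max}(\la_0^*)|$ and a stationary distribution $\pi$ with $(\pi,V_{\la_0^*})<\infty$. I do not foresee a genuine obstacle: the only conceptual point is that replacing the $\varlimsup$-hypothesis by a supremum-hypothesis is precisely what upgrades the pointwise inequality $K(x,\la)<0$ for large $x$ into the uniform inequality $K_{\max}(\la)<0$ required by Theorem~\ref{thm:3}, so there is no need to absorb a set where $K$ might be positive into a compact ``small set.'' This is exactly why the authors note that the proof may be omitted.
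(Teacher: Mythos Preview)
Your proposal is correct and matches the paper's intended argument exactly: the paper states that the proof ``is also similar to that of Corollary~\ref{cor:m<0} and is omitted,'' and you have spelled out precisely that adaptation, replacing the $\varlimsup$ hypothesis by the global $\sup$ so that the Taylor bound $K(x,\la)\le -\la m_0 + \tfrac{C_2}{2}\la^2$ holds for all $x\ge 0$ and Theorem~\ref{thm:3} applies. One trivial remark: at $\la = 2m_0/C_2$ the right-hand side equals~$0$ rather than being strictly negative (the same slip appears in the paper's proof of Corollary~\ref{cor:m<0}), so take any $\la_0^*\in(0,2m_0/C_2)\cap(0,\la_0/2]$, e.g.\ $\la_0^*=(m_0/C_2)\wedge(\la_0/2)$.
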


\begin{exm} Consider the case when $\nu_x \equiv 0$: there are no jumps,  this process is a reflected diffusion on the positive half-line. Assume 
$$
\sup\limits_{x > 0}g(x) = -\ol{g} < 0,\ \ \si(x) \equiv 1.
$$
Then in the notation of~\eqref{eq:ol-k}, we can calculate
$$
K_{\max}(\la) = \sup\limits_{x > 0}K(x, \la) = \sup\limits_{x > 0}\left[g(x)\la + \frac{\la^2}2\right] = -\ol{g}\la + \frac{\la^2}2.
$$
This function $K_{\max}(\la)$ assumes its minimum value 
$-\ol{g}^2/2$ at $\la_* = \ol{g}$. Therefore, this reflected diffusion is $V_{\ol{g}}$-uniformly ergodic with exponential rate of convergence $\vk = \ol{g}^2/2$. This includes the case of reflected Brownian motion on the half-line with negative  drift from \cite[Section 6]{LMT1996}. 
\end{exm} 

\begin{exm} Consider the case $g(x) \equiv -2$, $\si(x) \equiv 1$, and $\nu_x = \de_{x+1}$ for $x \ge 0$. In other words, this reflected jump-diffusion has constant negative drift $-2$, constant diffusion $1$, and it jumps with rate $1$; each jump is one unit to the right. Assumption 3 holds with any $\la_0$. The negative drift ``outweighs'' the jumps in the positive direction: $m(x) = -2 + 1 = -1$. We have:
$$
K(x, \la) \equiv K(\la) = -2\la + \frac{\la^2}2 + e^{\la} - 1.
$$
For every $\la > 0$ such that $K(\la) < 0$, this process is $V_{\la}$-uniformly ergodic with exponential rate of convergence $\vk = |K(\la)|$. It is easy to calculate that $K(\la) < 0$ for $\la \in (0, 0.849245)$. For example, the function $K(\la)$ attains minimum value $-0.230503$ at $\la_* = 0.442954$.   Therefore, this reflected jump-diffusion is $V_{\la_*}$-uniformly ergodic with exponential rate of convergence $\vk := 0.230503$. Choosing larger values of $\la$ such that $K(\la) < 0$ (say, $\la = 0.8$) results in lower rate of exponential convergence, but the norm $\norm{\cdot}_{V_{\la}}$ which measures convergence becomes stronger. 
\end{exm}

\begin{exm} Consider a reflected jump-diffusion with the same drift and diffusion coefficients as in Example 1, but with $\nu_x(\md y) = 1_{\{y > x\}}e^{x-y}\md y$. In other words, the jumps occur with rate $\nu_x(\BR_+) = 1$, but each jump is to the right with the magnitude distributed as $\Exp(1)$. Then Assumption 3 holds with $\la_0 = 1$, and $m(x) = -2 + 1 = -1$. We have:
$$ 
K(x, \la) \equiv K(\la) = -2\la + \frac{\la^2}2 + \frac{1}{1 - \la} - 1. 
$$
This attains minimum value $-0.135484$ at $\la_* = 0.245122$. Therefore, this reflected jump-diffusion is $V_{\la_*}$-uniformly ergodic with exponential rate of convergence $\vk = 0.135484$. 
\end{exm}

\section{The Case of Non-Stochastically Ordered Processes}

If the reflected jump-diffusion is not stochastically ordered, then we can still sometimes estimate the exponential rate of convergence. This is the case when this process is stochastically dominated by another reflected jump-diffusion, which, in turn, is stochastically ordered. 

\begin{defn} Take two Markov processes $X = (X(t), t \ge 0)$, $\ol{X} = (\ol{X}(t), t \ge 0)$ with transition kernels $(P^t)_{t \ge 0}, (\ol{P}^t)_{t \ge 0}$ on $\BR_+$. We say that $X$ is {\it stochastically dominated by} $\ol{X}$ if $P^t(x, \cdot) \preceq \ol{P}^t(x, \cdot)$ for all $t \ge 0$ and $x \in \BR_+$. In this case, we write $X \preceq \ol{X}$. 
\end{defn}

The following auxillary statement is proved similarly to Lemma~\ref{lemma:stoch-RJD}.

\begin{lemma} Take two reflected jump-diffusions $Z$ and $\ol{Z}$ on $\BR_+$ with common drift and diffusion coefficients $g$ and $\si^2$, which satisfy Assumption 1, and with families $(\nu_x)_{x \in \BR_+}$ and $(\ol{\nu}_x)_{x \in \BR_+}$ of jump measures satisfying Assumption 2. Assume that $\nu_x \preceq \ol{\nu}_x$ for every $x \in \BR_+$, and the family $(\ol{\nu}_x)_{x \in \BR_+}$ is stochastically ordered. Then $Z \preceq \ol{Z}$. 
\label{lemma:stoch-comp-RJD}
\end{lemma}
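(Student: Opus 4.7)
The plan is to construct, on a single probability space, a coupling $(Z, \overline{Z})$ of the two reflected jump-diffusions, both starting from the same point $x \in \BR_+$, such that $Z(t) \le \overline{Z}(t)$ a.s.\ for all $t \ge 0$; by the very definition of $\preceq$ on transition kernels, this will give $Z \preceq \overline{Z}$. The coupling follows the same piecing-out skeleton used for Lemma~\ref{lemma:stoch-RJD}: first observe that, since $\nu_x \preceq \overline{\nu}_x$ requires equal total mass and $(\overline{\nu}_x)_{x \ge 0}$ is stochastically ordered, the total rates $r(x) := \nu_x(\BR_+)$ and $\overline{r}(x) := \overline{\nu}_x(\BR_+)$ are both equal to a common constant $r$. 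Therefore both processes can be driven by a single Poisson process $\tau_1 \le \tau_2 \le \dots$ of rate $r$ for their potential jump times, set $\tau_0 := 0$.

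I then proceed by induction on the interval index $n$. For the induction base, I couple $Z$ and $\overline{Z}$ on $[\tau_0, \tau_1)$ as reflected diffusions on $\BR_+$ with the common coefficients $g, \sigma^2$, driven by the same Brownian motion (and same local time at $0$), starting from the same point $x$; here $Z \equiv \overline{Z}$ on this interval, which is trivially ordered. For the induction step, suppose $Z(t) \le \overline{Z}(t)$ a.s.\ on $[\tau_{n-1}, \tau_n)$. By left-continuity of the limits, $x_n := Z(\tau_n-) \le \overline{x}_n := \overline{Z}(\tau_n-)$. At the jump time I need to generate $Z(\tau_n) \sim r^{-1}\nu_{x_n}$ and $\overline{Z}(\tau_n) \sim r^{-1}\overline{\nu}_{\overline{x}_n}$ so that $Z(\tau_n) \le \overline{Z}(\tau_n)$ a.s. By hypothesis $\nu_{x_n} \preceq \overline{\nu}_{x_n}$, and by stochastic ordering of the barred family $\overline{\nu}_{x_n} \preceq \overline{\nu}_{\overline{x}_n}$, so transitivity gives $\nu_{x_n} \preceq \overline{\nu}_{\overline{x}_n}$. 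Invoking Strassen's theorem as in Remark~\ref{rmk:Kamae} (all randomizations done conditionally on $\CF_{\tau_n-}$), I produce a pair of jump destinations that are ordered almost surely. Finally, on $[\tau_n, \tau_{n+1})$ I continue $Z$ and $\overline{Z}$ as reflected diffusions driven by the same Brownian increments, starting from the ordered pair $(Z(\tau_n), \overline{Z}(\tau_n))$; the comparison theorem for one-dimensional reflected SDEs under Assumption~1 preserves the inequality $Z(t) \le \overline{Z}(t)$ throughout the interval.

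The induction yields the a.s.\ pathwise inequality on each $[\tau_{n-1}, \tau_n)$; since $\tau_n \to \infty$ a.s.\ by the argument in~\eqref{eq:tends-to-infty}, this gives $Z(t) \le \overline{Z}(t)$ for all $t \ge 0$, and hence $P^t(x, \cdot) \preceq \overline{P}^t(x, \cdot)$ for every $t \ge 0$ and every $x \in \BR_+$, which is precisely the definition of $Z \preceq \overline{Z}$.

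The only non-routine step is the jump-time step, and the main obstacle there is ensuring that the Strassen coupling can be implemented measurably and adaptedly at the random time $\tau_n$; this is handled in the standard way by pulling back to a regular conditional distribution given $\CF_{\tau_n-}$ and using an auxiliary uniform random variable from the product probability space. The other ingredient, the pathwise comparison of the two reflected diffusions on each inter-jump interval, is classical under Assumption~1 and was already invoked (for the equal-coefficients case) in the proof of Lemma~\ref{lemma:stoch-RJD}.
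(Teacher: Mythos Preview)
Your proposal is correct and follows essentially the same piecing-out/induction argument that the paper has in mind (it explicitly says the lemma is ``proved similarly to Lemma~\ref{lemma:stoch-RJD}''). The one additional observation needed beyond Lemma~\ref{lemma:stoch-RJD} is precisely the transitivity step $\nu_{x_n} \preceq \ol{\nu}_{x_n} \preceq \ol{\nu}_{\overline{x}_n}$ at each jump time, which you identify and justify correctly.
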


The next result is an improvement upon \cite[Theorem 3.4]{LMT1996}. We prove convergence in $\norm{\cdot}_{V}$-norm, that is, uniform ergodicity, as opposed to convergence in the total variation norm, which was done in \cite[Theorem 3.4]{LMT1996}. In particular, if $V = V_{\la}$, as is the case for reflected jump-diffusions, then we can estimate the convergence rate for moments, as in Remark~\ref{rmk:moments}. Such estimation is impossible when one has convergence only in the total variation norm.

\begin{thm} Take a (possibly non-stochastically ordered) Markov process $X = (X(t), t \ge 0)$ which is stochastically dominated by another stochastically ordered Markov process $\ol{X} = (\ol{X}(t), t \ge 0)$. Assume $\ol{X}$ has a modified nondecreasing Lyapunov function $V$ with Lyapunov constant $k$.

\medskip

(a) Then for every $x_1, x_2 \in \BR_+$, we have: 
\begin{equation}
\label{eq:difference-pt-new}
\norm{P^t(x_1, \cdot) - P^t(x_2, \cdot)}_V \le \left[V(x_1) + V(x_2)\right]e^{-kt},\ \ t \ge 0.
\end{equation}

\medskip

(b) For initial distributions $\mu_1$ and $\mu_2$ on $\BR_+$ with $(\mu_1, V) < \infty$ and $(\mu_2, V) < \infty$, we have:
\begin{equation}
\label{eq:difference-distributions-pt-new}
\norm{\mu_1P^t - \mu_2P^t}_V \le \left[(\mu_1, V) + (\mu_2, V)\right]e^{-kt},\ \ t \ge 0.
\end{equation}

\medskip

(c) If the process $X$ has a stationary distribution $\pi$ which satisfies $(\pi, V) < \infty$, then this stationary distribution is unique, and the process $X$ is $V$-uniformly ergodic with exponential rate of convergence $\vk = k$. More precisely, we have the following estimate:
\begin{equation}
\label{eq:difference-distributions-pt-new}
\norm{P^t(x, \cdot) - \pi}_V \le \left[(\pi, V) + V(x)\right]e^{-kt},\ \ t \ge 0;
\end{equation}
\label{thm:non-stoch-ordered}
\end{thm}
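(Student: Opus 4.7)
The plan is to prove part (a) via a direct pathwise coupling argument and then deduce (b) and (c) by standard reductions. For (b): for any coupling $\Pi$ of $\mu_1,\mu_2$ and any $|g|\le V$,
\[
\bigl|(\mu_1 P^t, g)-(\mu_2 P^t, g)\bigr| \le \int \bigl|P^t g(x_1)-P^t g(x_2)\bigr|\,\Pi(\md x_1,\md x_2) \le e^{-kt}\int[V(x_1)+V(x_2)]\,\Pi(\md x_1,\md x_2),
\]
which equals $e^{-kt}[(\mu_1,V)+(\mu_2,V)]$. For (c), plug $\mu_1=\delta_x$ and $\mu_2=\pi$ into (b) and use $\pi P^t=\pi$; uniqueness of $\pi$ within $\{(\pi',V)<\infty\}$ follows by substituting $\mu_1=\pi,\,\mu_2=\pi'$ and sending $t\to\infty$. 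So the substance is (a).

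For (a), WLOG assume $x_1\le x_2$. Build a joint coupling of four processes $(X^{(x_1)},X^{(x_2)},\ol{X}^{(x_1)},\ol{X}^{(x_2)})$ on one probability space such that (i) $X^{(x_i)}(t)\le \ol{X}^{(x_i)}(t)$ pathwise for $i=1,2$, using the domination $X\preceq \ol{X}$ and the Kamae-type path-coupling result recalled in Remark~\ref{rmk:Kamae}, and (ii) $\ol{X}^{(x_1)}(t)\le \ol{X}^{(x_2)}(t)$ pathwise, using the stochastic ordering of $\ol{X}$. Let $\sigma:=\inf\{t\ge 0: \ol{X}^{(x_2)}(t)=0\}$. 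The sandwich $0\le X^{(x_i)}(\sigma)\le \ol{X}^{(x_i)}(\sigma)\le \ol{X}^{(x_2)}(\sigma)=0$ forces all four processes to the origin at $\sigma$, so by the strong Markov property we may arrange that $X^{(x_1)}(t)=X^{(x_2)}(t)$ for every $t\ge\sigma$; coalescence thus occurs by time $\sigma$.

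Let $\sigma_i:=\inf\{t\ge 0:\ol{X}^{(x_i)}(t)=0\}$, so $\sigma_1\le\sigma_2=\sigma$. The modified-Lyapunov hypothesis applied to $\ol{X}^{(x_i)}$ says $M$ in~\eqref{eq:M-mart} is a supermartingale; multiplying by $e^{ks}$ and performing an integration by parts (the same manipulation used near the end of the proof of Theorem~\ref{thm:3}) shows that $e^{k(t\wedge\sigma_i)}V(\ol{X}^{(x_i)}(t\wedge\sigma_i))$ is a nonnegative local supermartingale, and Fatou gives
\[
\ME\bigl[V(\ol{X}^{(x_i)}(t));\,\sigma_i>t\bigr] \le V(x_i)\,e^{-kt}.
\]
For any $|g|\le V$, using coalescence at $\sigma$ and the triangle inequality $|g(a)-g(b)|\le V(a)+V(b)$,
\[
|P^t g(x_1)-P^t g(x_2)| \le \ME\bigl[\bigl(V(X^{(x_1)}(t))+V(X^{(x_2)}(t))\bigr)\,1_{\sigma>t}\bigr].
\]
The $i=2$ contribution is handled directly through the sandwich $X^{(x_2)}\le \ol{X}^{(x_2)}$, monotonicity of $V$, and the $\ol{X}^{(x_2)}$-supermartingale, yielding $V(x_2)e^{-kt}$. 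The $i=1$ contribution is split on $\{\sigma>t\}=\{\sigma_1>t\}\cup\{\sigma_1\le t<\sigma_2\}$: on the first piece the $\ol{X}^{(x_1)}$-supermartingale delivers $V(x_1)e^{-kt}$ through $V(X^{(x_1)}(t))\le V(\ol{X}^{(x_1)}(t\wedge\sigma_1))$; on the second piece, the monotone chain $X^{(x_1)}\le\ol{X}^{(x_1)}\le\ol{X}^{(x_2)}$ slides the estimate onto the $\ol{X}^{(x_2)}$-supermartingale. Summing yields $[V(x_1)+V(x_2)]e^{-kt}$.

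The main obstacle is precisely this last bookkeeping: the supermartingale produced by the modified Lyapunov property controls $\ol{X}^{(x_i)}$ only up to $\sigma_i$, while coalescence of $X^{(x_1)}$ and $X^{(x_2)}$ happens at the strictly larger time $\sigma=\sigma_2$, so a naive estimate that ignores the $\ol{X}^{(x_1)}$-supermartingale only gives the looser bound $2V(x_2)e^{-kt}$. Using the $\ol{X}^{(x_1)}$-supermartingale on $\{\sigma_1>t\}$ and sliding the residual stretch $\{\sigma_1\le t<\sigma_2\}$ onto the $\ol{X}^{(x_2)}$-supermartingale through the monotone chain is what recovers the symmetric constant $V(x_1)+V(x_2)$; everything else is standard coupling machinery and the reductions in the first paragraph.
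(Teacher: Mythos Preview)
Your coupling-and-coalescence outline coincides with the paper's approach: both build a four-process coupling $X^{(x_i)}\le\ol X^{(x_i)}$, $\ol X^{(x_1)}\le\ol X^{(x_2)}$, take the coalescence time to be $\sigma=\sigma_2$ (the hitting time of $0$ for the dominating process started from the larger point), and control what remains via the supermartingale $e^{k(t\wedge\sigma_i)}V(\ol X^{(x_i)}(t\wedge\sigma_i))$. Your reductions of (b) and (c) to (a) are also the same as the paper's.

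The gap is in your final accounting. With your conventions ($x_1\le x_2$, $\sigma=\sigma_2$), the three pieces you isolate are
\[
A=\ME\bigl[V(X^{(x_2)}(t))\,1_{\sigma_2>t}\bigr],\qquad
B=\ME\bigl[V(X^{(x_1)}(t))\,1_{\sigma_1>t}\bigr],\qquad
C=\ME\bigl[V(X^{(x_1)}(t))\,1_{\sigma_1\le t<\sigma_2}\bigr].
\]
You correctly get $A\le V(x_2)e^{-kt}$ and $B\le V(x_1)e^{-kt}$. But ``sliding $C$ onto the $\ol X^{(x_2)}$-supermartingale'' through $X^{(x_1)}\le\ol X^{(x_2)}$ only yields
\[
C\le \ME\bigl[V(\ol X^{(x_2)}(t))\,1_{\sigma_1\le t<\sigma_2}\bigr]\le \ME\bigl[V(\ol X^{(x_2)}(t))\,1_{\sigma_2>t}\bigr]\le V(x_2)e^{-kt},
\]
an \emph{additional} contribution. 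It cannot be merged with $A$: the event $\{\sigma_1\le t<\sigma_2\}$ is a subset of $\{\sigma_2>t\}$, so you are invoking the same supermartingale bound twice on overlapping events, not on complementary ones. Summing honestly gives $A+B+C\le [V(x_1)+2V(x_2)]e^{-kt}$, i.e.\ $[V(x_1\wedge x_2)+2V(x_1\vee x_2)]e^{-kt}$, not the stated $[V(x_1)+V(x_2)]e^{-kt}$. Equivalently, dropping your split and bounding both $V(X^{(x_i)}(t))$ by $V(\ol X^{(x_2)}(t))$ on $\{\sigma_2>t\}$ gives the symmetric but still weaker $2V(x_1\vee x_2)e^{-kt}$. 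So your diagnosis of the delicate step is correct---the $\ol X^{(x_1)}$-supermartingale controls matters only up to $\sigma_1$, while coalescence waits until $\sigma_2$---but the proposed fix does not recover the symmetric constant. The paper is terse at exactly this spot: it treats the term started from the larger point in detail and then writes ``similarly estimate the second term'' without performing the split you attempt; your write-up does not supply the missing justification.
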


\begin{proof} Let us show (a). We combine the proofs of \cite[Theorem 2.2]{LMT1996}, \cite[Theorem 4.1]{LT1996}, and modify them a bit. Without loss of generality, assume $x_2 \le x_1$. Consider two copies $X_1, X_2$ of the process $X$, and two copies $\ol{X}_1, \ol{X}_2$ of the process $\ol{X}$, starting from
$$
X_1(0) = \ol{X}_1(0) = x_1,\ \ X_2(0) = \ol{X}_2(0) = x_2.
$$
Take a measurable function $g : \BR_+ \to \BR$ such that $|g| \le V$, and estimate from above the difference
\begin{equation}
\label{eq:difference}
\left|\ME g(X_1(t)) - \ME g(X_2(t))\right|.
\end{equation}
Because of stochastic ordering, using $x_2 \le x_1$, we can couple these processes so that 
\begin{equation}
\label{eq:stoch-coupling}
X_2(t) \le \ol{X}_2(t) \le \ol{X}_1(t),\ \ X_2(t) \le X_1(t) \le \ol{X}_1(t).
\end{equation}
Define the stopping time $\ol{\tau}(0) := \inf\{t \ge 0\mid \ol{X}_1(t) = 0\}$. By~\eqref{eq:stoch-coupling}, $X_1(\ol{\tau}(0)) = X_2(\ol{\tau}(0)) = 0$, so $\ol{\tau}(0)$ is a (random) coupling time for $X_1$ and $X_2$: the laws of $(X_1(t), t \ge \ol{\tau}(0))$ and $(X_2(t), t \ge \ol{\tau}(0))$ are the same. Therefore, 
$$
\ME g(X_1(t))1_{\{t > \ol{\tau}(0)\}} = \ME g(X_2(t))1_{\{t > \ol{\tau}(0)\}},
$$
and the quantity from~\eqref{eq:difference} can be estimated from above by 
\begin{equation}
\label{eq:estimate-from-above}
\left|\ME g(X_1(t))1_{\{t \le \ol{\tau}(0)\}} - \ME g(X_2(t))1_{\{t \le \ol{\tau}(0)\}}\right| \le 
\ME|g(X_1(t))|1_{\{t \le \ol{\tau}(0)\}} + \ME|g(X_2(t))|1_{\{t \le \ol{\tau}(0)\}}.
\end{equation}
Let us estimate the first term in the right-hand side of~\eqref{eq:estimate-from-above}. Because $|g| \le V$, we have:
\begin{equation}
\label{eq:|g|}
\ME|g(X_1(t))|1_{\{t \le \ol{\tau}(0)\}} \le \ME V(X_1(t))1_{\{t \le \ol{\tau}(0)\}}.
\end{equation}
Next, because the function $V$ is nondecreasing, 
\begin{equation}
\label{eq:chain-of-ineq}
e^{kt}\ME V(X_1(t))1_{\{t \le \ol{\tau}(0)\}} \le e^{kt}\ME V(\ol{X}_1(t))1_{\{t \le \ol{\tau}(0)\}} \le \ME\left[e^{k(t\wedge\ol{\tau}(0))}V\left(\ol{X}_1(t\wedge\ol{\tau}(0))\right)\right].
\end{equation}
Let us show that the following process is a supermartingale:
\begin{equation}
\label{eq:tilde-M}
\tilde{M}(t) = e^{k(t\wedge\tau)}V\left(\ol{X}_1(t\wedge\tau)\right),\ \ t \ge 0.
\end{equation}
Indeed, from~\eqref{eq:M-mart}, we already know that the following process is a supermartingale:
$$
M(t) = V(\ol{X}_1(t\wedge\ol{\tau}(0))) + k\int_0^{t\wedge\ol{\tau}(0)}V(\ol{X}_1(s))\md s,\ \ t \ge 0.
$$
Applying Ito's formula to $\tilde{M}(t)$ in~\eqref{eq:tilde-M} for $t \le \ol{\tau}(0)$, we have:
$$
\md\tilde{M}(t) = ke^{kt}V\left(\ol{X}_1(t)\right)\md t + e^{kt}\md V\left(\ol{X}_1(t)\right) = e^{kt}\md M(t).
$$
This is also true for $t \ge \ol{\tau}(0)$, because both $M$ and $\tilde{M}$ are constant on $[\ol{\tau}(0), \infty)$. Since $M$ is a supermartingale, it can be represented as $M(t) = M_1(t) + M_2(t)$, where $M_1$ is a local martingale, and $M_2$ is a nonincreasing process. Therefore, 
$$
\tilde{M}(t) = \int_0^te^{ks}\md M_1(s) + \int_0^te^{ks}\md M_2(s) =: \tilde{M}_1(t) + \tilde{M}_2(t)
$$
is also a sum of a local martingale and a nonincreasing process. Thus, it is a local supermartingale. Actually, it is a true supermartingale, because it is nonnegative (use Fatou's lemma). Therefore,
\begin{equation}
\label{eq:expectations-tildeM}
\ME\tilde{M}(t) \le \ME\tilde{M}(0) = V(x_1).
\end{equation}
Comparing~\eqref{eq:|g|},~\eqref{eq:chain-of-ineq},~\eqref{eq:tilde-M},~\eqref{eq:expectations-tildeM}, we have: $\ME|g(X_1(t))|1_{\{t \le \ol{\tau}(0)\}} \le V(x_1)$. Similarly estimate the second term in the right-hand side of~\eqref{eq:estimate-from-above}, and combine this with~\eqref{eq:estimate-from-above}:
\begin{equation}
\label{eq:complete-estimate}
\left|\ME g(X_1(t)) - \ME g(X_2(t))\right| \le \left[V(x_1) + V(x_2)\right]e^{-kt},\ \ t \ge 0.
\end{equation}
Taking the supremum in~\eqref{eq:complete-estimate} over $|g| \le V$, we complete the proof of~\eqref{eq:difference-pt-new}.

\medskip

(b) Integrate over $(x_1, x_2) \sim \mu_1\times\mu_2$ in~\eqref{eq:complete-estimate} and take the supremum over $|g| \le V$.

\medskip

(c) Apply (b) to $\mu_1 = \pi$ and $\mu_2 = \de_{x}$. Since $V(x) \ge 1$, we can take $D = 1 + (\pi, V)$ in~\eqref{eq:uniform-ergodicity}.
\end{proof}

Now, we apply Theorem~\ref{thm:non-stoch-ordered} to reflected jump-diffusions. 

\begin{cor} Take drift and diffusion coefficients $g$, $\si^2$, satisfying Assumption 1. Take two families $(\nu_x)_{x \in \BR_+}$ and $(\ol{\nu}_x)_{x \in \BR_+}$ of jump measures which satisfy Assumptions 2 and 3, such that $\nu_x \preceq \ol{\nu}_x$ for every $x \in \BR_+$, and the family $(\ol{\nu}_x)_{x \in \BR_+}$ is stochastically ordered. Consider a reflected jump-diffusion on $\BR_+$ with drift and diffusion coefficients $g, \si^2$, and the family $(\nu_x)_{x \ge 0}$ of jump measures. Let 
$$
\ol{K}(x, \la) = g(x)\la + \si^2(x)\frac{\la^2}2 + \int_0^{\infty}\left[e^{\la(y-x)} - 1\right]\ol{\nu}_x(\md y).
$$
Assume there exists a $\la_* > 0$ such that 
\begin{equation}
\label{eq:sup-aux}
\sup\limits_{x > 0}\ol{K}(x, \la_*) =: \ol{K}_{\max}(\la_*) < 0.
\end{equation}
Then $Z$ is $V_{\la_*}$-uniformly ergodic with exponential rate of convergence $\vk = |\ol{K}_{\max}(\la_*)|$. 
\label{cor:non-stoch-ordered-RJD}
\end{cor}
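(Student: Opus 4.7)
The plan is to reduce the corollary to Theorem~\ref{thm:non-stoch-ordered}(c) by taking the dominating process $\ol{X}$ to be the reflected jump-diffusion $\ol{Z}$ with drift $g$, diffusion $\si^2$, and jump family $(\ol{\nu}_x)_{x \in \BR_+}$, and $X = Z$. Three ingredients must be verified: stochastic domination $Z \preceq \ol{Z}$; existence of a nondecreasing modified Lyapunov function $V_{\la_*}$ for $\ol{Z}$ with Lyapunov constant $k = |\ol{K}_{\max}(\la_*)|$; and existence of a stationary distribution $\pi$ for $Z$ with $(\pi, V_{\la_*}) < \infty$.

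For the first ingredient, $(\ol{\nu}_x)_{x \in \BR_+}$ is stochastically ordered by hypothesis, so Lemma~\ref{lemma:stoch-RJD} gives that $\ol{Z}$ itself is stochastically ordered, and Lemma~\ref{lemma:stoch-comp-RJD}, applied to the pair $(\nu_x, \ol{\nu}_x)$, yields $Z \preceq \ol{Z}$. For the second ingredient, note that the hypothesis~\eqref{eq:sup-aux} is exactly condition~\eqref{eq:ol-k} of Theorem~\ref{thm:3} applied to $\ol{Z}$, and the proof of Theorem~\ref{thm:3} establishes precisely that $V_{\la_*}$ (which is nondecreasing) is a modified Lyapunov function for $\ol{Z}$ with Lyapunov constant $|\ol{K}_{\max}(\la_*)|$.

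The third ingredient requires a small observation: since $\la_* > 0$, the function $y \mapsto e^{\la_*(y-x)} - 1$ is nondecreasing in $y$, and $\nu_x \preceq \ol{\nu}_x$ forces the total masses to agree, so that integration against this function respects stochastic order. Therefore
\begin{equation*}
K(x, \la_*) \le \ol{K}(x, \la_*) \le \ol{K}_{\max}(\la_*) < 0, \qquad x \ge 0,
\end{equation*}
and in particular $\varlimsup_{x \to \infty} K(x, \la_*) < 0$. Assumption~3 for $(\nu_x)$ follows from the same monotonicity applied to $e^{\la_0 |y-x|}$ together with Assumption~3 for $(\ol{\nu}_x)$. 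Hence Theorem~\ref{thm:1} applies to $Z$ and furnishes a unique stationary distribution $\pi$ with $(\pi, V_{\la_*}) < \infty$.

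With all three ingredients in hand, Theorem~\ref{thm:non-stoch-ordered}(c) applied to $X = Z$, $\ol{X} = \ol{Z}$, $V = V_{\la_*}$, and $k = |\ol{K}_{\max}(\la_*)|$ delivers the stated $V_{\la_*}$-uniform ergodicity with exponential rate $\vk = |\ol{K}_{\max}(\la_*)|$. The only subtlety in the argument, and the one place where a small computation is needed rather than a direct citation, is the monotonicity step $K(x, \la_*) \le \ol{K}(x, \la_*)$; everything else is an assembly of previously proved results.
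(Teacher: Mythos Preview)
Your proof is correct and follows essentially the same route as the paper: compare $K(x,\la_*)$ with $\ol{K}(x,\la_*)$ via monotonicity of $y\mapsto e^{\la_*(y-x)}-1$, invoke Theorem~\ref{thm:1} to obtain a stationary distribution $\pi$ for $Z$ with $(\pi,V_{\la_*})<\infty$, use Lemma~\ref{lemma:stoch-comp-RJD} for $Z\preceq\ol{Z}$, use the argument of Theorem~\ref{thm:3} to make $V_{\la_*}$ a modified Lyapunov function for $\ol{Z}$, and conclude by Theorem~\ref{thm:non-stoch-ordered}. One small correction: your aside that Assumption~3 for $(\nu_x)$ follows from stochastic domination is both unnecessary (the corollary already hypothesizes Assumption~3 for both families) and incorrectly argued, since $y\mapsto e^{\la_0|y-x|}$ is not monotone in $y$.
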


\begin{proof} For each $x \in \BR_+$, the function $y \mapsto e^{\la_*(y-x)} - 1$ is nondecreasing, and $\nu_x \preceq \ol{\nu}_x$. Therefore,
$$
\int_0^{\infty}\left[e^{\la_*(y-x)} - 1\right]\nu_x(\md y) \le \int_0^{\infty}\left[e^{\la_*(y-x)} - 1\right]\ol{\nu}_x(\md y).
$$
This, in turn, implies that for $x \in \BR_+$, 
\begin{equation}
\label{eq:k-ol-k}
K(x, \la_*) = g(x)\la_* + \si^2(x)\frac{\la_*^2}2 + \int_0^{\infty}\left[e^{\la_*(y-x)} - 1\right]\nu_x(\md y) \le \ol{K}(x, \la_*).
\end{equation}
Comparing~\eqref{eq:sup-aux} with~\eqref{eq:k-ol-k}, we get: 
$$
\sup\limits_{x > 0}K(x, \la_*) \le \sup\limits_{x > 0}\ol{K}(x, \la_*) < 0.
$$
By Theorem~\ref{thm:3}, the process $Z$ is $V_{\la_*}$-uniformly ergodic, and its stationary distribution $\pi$ satisfies $(\pi, V_{\la_*}) < \infty$. Consider another reflected jump-diffusion $\ol{Z}$ with the same drift and diffusion coefficients $g, \si^2$, and the family $(\ol{\nu}_x)_{x \in \BR_+}$ of jump measures. By Lemma~\ref{lemma:stoch-comp-RJD}, $Z \preceq \ol{Z}$. Similarly to Theorem~\ref{thm:2}, we can show that $V_{\la_*}$ is a modified Lyapunov function for $\ol{Z}$. Applying Theorem~\ref{thm:non-stoch-ordered} and using $V_{\la_*}$ as a modified Lyapunov function, we complete the proof. 
\end{proof}

\begin{exm} Take a continuous function $\psi : \BR_+ \to \BR_+$ such that $\psi(x) \le x + 1$. Consider a reflected jump-diffusion on $\BR_+$ with $g(x) = -2$, $\si^2(x) = 1$, and the family of jump measures $(\nu_x)_{x \ge 0}$, with $\nu_x := \de_{\psi(x)}$. This family is not necessarily stochastically ordered (because $\psi$ is not necessarily nondecreasing). However, $\nu_x \preceq \de_{x+1}$, and we can apply Corollary~\ref{cor:non-stoch-ordered-RJD}. We have:
$$
\ol{K}(x, \la) = -2\la + \frac{\la^2}2 + e^{\la} - 1   
$$
has minimum value $-0.230503$ at $\la_* = 0.442954$.   Therefore, this reflected jump-diffusion is $V_{\la_*}$-uniformly ergodic with exponential rate of convergence $\vk := 0.230503$.
\end{exm}

\section{The Best Exponential Rate of Convergence} Consider a reflected jump-diffusion $Z = (Z(t), t \ge 0)$ with constant drift and diffusion coefficients: $g(x) \equiv g$, $\si^2(x) \equiv \si^2$, and with family of jump measures $(\nu_x)_{x \ge 0}$ defined by $\nu_x(E) = \mu((E-x)\cap\BR_+)$ for $E\subseteq\BR_+$, where $\mu$ is a finite Borel measure supported on $\BR_+$. In other words, every $\nu_x$ is the push-forward of the measure $\mu$ with respect to the mapping $y \mapsto x + y$. This process is a reflected Brownian motion on $\BR_+$ with jumps, which are directed only to the right, with the magnitude and the intensity independent of $x$. 

Recall that the intensity of jumps originating from a point $x \in \BR_+$ is equal to $r(x) := \nu_x(\BR_+)$, and its magnitude is distributed as $|y-x|$, where $y \sim r^{-1}(x)\nu_x(\cdot)$. In this case,  the intensity of jumps is equal to $r = \mu(\BR_+)$, and the magnitude is distributed according to the normalized measure $r^{-1}\mu(\cdot)$. This was the case in Examples 2 and 3 from Section 4. 

Then the family of jump measures $(\nu_x)_{x \ge 0}$ is stochastically ordered. Next, 
$$
K(x, \la) = K(\la) = g\la + \frac{\si^2}2\la^2 + \int_{\BR_+}\left[e^{\la z} - 1\right]\mu(\md z).
$$
From Theorem~\ref{thm:3}, we know that if 
\begin{equation}
\label{eq:drift<0}
g + \int_{\BR_+}z\mu(\md z) < 0,
\end{equation}
then there exists a $\la > 0$ such that $K(\la) < 0$, and the reflected jump-diffusion is $V_{\la}$-uniformly ergodic with $\vk = |K(\la)|$. Actually, this rate of convergence is exact: one cannot improve this result. More precisely, for this $\la$, one cannot find a $\vk > |K(\la)|$ such that the reflected jump-diffusion is $V_{\la}$-uniformly ergodic with exponential rate of convergence $\vk$. This is a counterpart of \cite[Theorem 2.3]{LMT1996}, which finds the exact exponential rate of convergence in the total variation metric. Unfortunately, we cannot apply their results, because they require $\pi(\{0\}) > 0$ for a stationary distribution $\pi$, which is not true in our case. As mentioned in Example 2, we can make a trade-off between the rate of convergence and the strength of the norm $\norm{\cdot}_{V_{\la}}$. 

\begin{lemma} Under the condition~\eqref{eq:drift<0}, for every $\la \in (0, \la_0)$ such that $K(\la) < 0$, and every $x_1, x_2 \in \BR_+$, we have:
$$
\ME_{x_1}V_{\la}(Z(t)) - \ME_{x_2}V_{\la}(Z(t)) = (V_{\la}(x_1) - V_{\la}(x_2))e^{-|K(\la)|t},\ \ t \ge 0.
$$
\label{lemma:exact-rate}
\end{lemma}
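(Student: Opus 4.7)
The plan is to apply Ito's formula for jump semimartingales to $V_\la(Z(t))$, exploiting that under the constant drift and diffusion and the translation-invariant jump measure $\nu_x(\cdot)=\mu(\cdot-x)$, the function $V_\la(x)=e^{\la x}$ is a pointwise eigenfunction of $\CL$ on $(0,\infty)$ with eigenvalue $K(\la)=-\vk$. The only deviation from a genuine eigenfunction relation is the boundary: since $V'_\la(0)=\la\ne 0$, the Skorokhod local time contributes $\la V_\la(0)\,d\ml(s)=\la\,d\ml(s)$. Grouping the drift, diffusion and compensated-jump parts into $-\vk V_\la(Z(s))\,ds$ and using the integrating factor $e^{\vk t}$, Ito's formula reduces to
\begin{equation*}
e^{\vk t}V_\la(Z(t)) = V_\la(x) + \la\int_0^t e^{\vk s}\,d\ml(s) + \widetilde M(t),
\end{equation*}
with $\widetilde M$ a local martingale. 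Taking $\ME_x$ (with integrability secured by Assumption 3 and the finite stationary moment from Theorem~\ref{thm:3}) gives
\begin{equation*}
\ME_x V_\la(Z(t)) = V_\la(x)e^{-\vk t} + \la e^{-\vk t}\,\Psi(x,t),\qquad \Psi(x,t):=\ME_x\!\int_0^t e^{\vk s}\,d\ml(s).
\end{equation*}

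Subtracting the formula at $x_2$ from that at $x_1$, the claim reduces to showing that $\Psi(x,t)$ does not depend on $x$. My plan for this step is to use the explicit Skorokhod representation $Z^{(x)}(t)=\xi(t)+\max(x,I(t))$, where $\xi(t)=gt+\si W(t)+J(t)$ is the driving Levy process and $I(t)=-\inf_{s\le t}\xi(s)$, so that $\ml^{(x)}(t)=(I(t)-x)^+$ has an explicit, spatially homogeneous form driven only by $\xi$. Combined with the strong Markov property at $\tau_x:=\inf\{s\ge 0:Z^{(x)}(s)=0\}$ and the martingale identity $\ME_x[e^{\vk\tau_x}]=V_\la(x)$ that falls out of the same Ito computation, the post-$\tau_x$ portion of $\Psi(x,t)$ is to be matched, via strong Markov, to a $\Psi(0,\cdot)$-type expression that is $x$-free.

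I expect the $x$-independence of $\Psi(\cdot,t)$ to be the main obstacle: the analogous statement fails for a generic reflected diffusion, so the argument must use, in an essential way, the hypotheses special to this section --- constant $g$ and $\si^2$, translation-invariant jump measure, and jumps directed only to the right (making $Z$ skip-free downwards). Closing this step will likely require combining the strong-Markov decomposition at $\tau_x$ with a Wiener--Hopf-type identity for the joint law of $(\xi(t),I(t))$, and this is where I anticipate the bulk of the technical work; once that is in place, the stated equality follows immediately by the subtraction described above.
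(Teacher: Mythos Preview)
Your route diverges from the paper's. The paper never carries the boundary term: it couples two copies $X_1,X_2$ (with $x_2\le x_1$) driven by the same noise, uses that they coalesce at $\tau(0):=\inf\{t:X_1(t)=0\}$, and rewrites the difference entirely in terms of the stopped process $\bar M(t)=e^{\vk(t\wedge\tau(0))}V_\la\bigl(Z(t\wedge\tau(0))\bigr)$. The constant-coefficient, rightward-jump hypotheses of this section enter by turning the inequalities~\eqref{eq:V-vs-ol-V}--\eqref{eq:CL-V} into \emph{equalities} on $(0,\infty)$, upgrading the supermartingale of Theorem~\ref{thm:non-stoch-ordered} to a genuine martingale; the identity is then read off from $\ME_x\bar M(t)=V_\la(x)$ for $x=x_1,x_2$. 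Stopping at $\tau(0)$ is precisely what eliminates the $d\ml$ contribution you are trying to control.

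Your reduction to ``$\Psi(x,t)$ is independent of $x$'' is, via your It\^o identity, equivalent to the statement you want --- but that assertion is false, so the plan cannot close. From the Skorokhod formula $\ml^{(x)}(s)=(I(s)-x)^+$ you quote, with $I(s)=-\inf_{u\le s}\xi(u)$ not depending on $x$, one gets $d\ml^{(x)}(s)=1_{\{s\ge\tau_x\}}\,dI(s)$; hence for $x_2<x_1$,
\[
\Psi(x_2,t)-\Psi(x_1,t)\;=\;\ME\!\int_{\tau_{x_2}\wedge t}^{\tau_{x_1}\wedge t} e^{\vk s}\,dI(s)\;>\;0\qquad\text{for every }t>0.
\]
Equivalently: for fixed $x>0$ and small $t$ the quantity $\Psi(x,t)$ is negligible (the process has not reached $0$), whereas $\Psi(0,t)\asymp\sqrt t$ from Brownian local-time scaling at the origin. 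In PDE terms, your It\^o identity forces $u(x,t):=\ME_xV_\la(Z(t))$ to be $e^{\la x-\vk t}+f(t)$ if $\Psi$ were $x$-free, which violates the Neumann condition $\partial_xu(0,t)=0$. So neither the strong-Markov decomposition at $\tau_x$ nor a Wiener--Hopf identity can deliver the $x$-independence; you should drop the local-time bookkeeping and follow the paper's stopping argument instead.
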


\begin{proof} Let $\vk = |K(\la)|$. We must go over the proofs of Theorems~\ref{thm:1},~\ref{thm:2}, and~\ref{thm:non-stoch-ordered}. Using the notation from these theorems with an adjustment: $\ol{\eta}(0) = \eta(0)$, we get:
\begin{align*}
\ME_{x_1}V_{\la}(Z(t)) &- \ME_{x_2}V_{\la}(Z(t)) = \ME V_{\la}(X_1(t)) - \ME V_{\la}(X_2(t)) \\ & = \ME V_{\la}(X_1(t))1_{\{\tau(0) > t\}} - \ME V_{\la}(X_2(t))1_{\{\tau(0) > t\}} \\ & =  \ME V_{\la}(X_1(t\wedge\tau(0)))1_{\{\tau(0) > t\}} - \ME V_{\la}(X_2(t\wedge\tau(0)))1_{\{\tau(0) > t\}}.
\end{align*}
Multiplying by $e^{\vk t}$, we get:
\begin{align*}
e^{\vk t}&\left[\ME_{x_1}V_{\la}(Z(t)) - \ME_{x_2}V_{\la}(Z(t))\right] \\ & = \ME\left[e^{\vk (t\wedge\tau(0))}V_{\la}(X_1(t\wedge\tau(0)))1_{\{\tau(0) > t\}}\right] - \ME\left[e^{\vk (t\wedge\tau(0))}V_{\la}(X_2(t\wedge\tau(0)))1_{\{\tau(0) > t\}}\right] \\ & = 
\ME\left[e^{\vk(t\wedge\tau(0))}V_{\la}(X_1(t\wedge\tau(0)))\right] - \ME\left[e^{\vk(t\wedge\tau(0))}V_{\la}(X_2(t\wedge\tau(0)))\right],
\end{align*}
because on the event $\{t \ge \tau(0)\}$ we have: $X_1(t\wedge\tau(0)) = X_2(t\wedge\tau(0)) = 0$. Next, if we show that 
\begin{equation}
\label{eq:new-M}
\ol{M}(t) = e^{\vk(t\wedge\tau(0))}V_{\la}(Z(t\wedge\tau(0))),\ \ t \ge 0,
\end{equation}
is a martingale for every initial condition $Z(0) = x \in \BR_+$, then the rest of the proof is trivial: just use $\ME_{x}\ol{M}(t) = \ol{M}(0) = V_{\la}(x)$ for $x = x_1$ and $x = x_2$. Let us show that~\eqref{eq:new-M} is a martingale. We follow the proof of Theorems~\ref{thm:1},~\ref{thm:2}. If $x > s_2$, where $s_2$ is taken from~\eqref{eq:phi}, then we have equality in~\eqref{eq:V-vs-ol-V}, in~\eqref{eq:CN-V}, and in~\eqref{eq:CL-V}. Indeed, take an $x > s_2$, and let $y \sim r^{-1}(x)\nu_x(\cdot)$. Then $y - x \sim \mu$, therefore $y - x \ge 0$, and $y > s_2$, $\phi(y) = y$. Therefore, as in Theorem~\ref{thm:2}, the process
$$
V_{\la}(Z(t\wedge\tau(\eta))) + \vk\int_0^{t\wedge\tau(\eta)}V_{\la}(Z(s))\md s,\ \ t \ge 0,
$$
is a local martingale for every $\eta > 0$, and hence for $\eta = 0$, because $\tau(\eta) \uparrow \tau(0)$. Similarly to the proof of Theorem~\ref{thm:non-stoch-ordered}, we can show that~\eqref{eq:new-M} is a local martingale. Actually, it is a true martingale. Indeed, take an $\eps := \la_0/\la - 1 > 0$. Then for all $x \in \BR_+$ and $t > 0$, 
\begin{equation}
\label{eq:finite-exp}
\ME_x\sup\limits_{0 \le s \le t}\left[V_{\la}(Z(s))\right]^{1 + \eps} < \infty.
\end{equation}
Indeed, we can represent $Z(s) = B(s) + \sum_{i=1}^{\mathcal J(s)}\xi_i$, where $B = (B(s), s \ge 0)$ is a reflected Brownian motion on $\BR_+$ with drift and diffusion coefficients $g$ and $\si^2$, starting from $B(0) = x$, random variables $\xi_i \sim r^{-1}\mu(\cdot)$ are i.i.d., $\mathcal J = (\mathcal J(s), s \ge 0)$ is a Poisson process on $\BR_+$ with constant intensity $r$, and $B, \mathcal J, \xi_i$ are independent. Then
\begin{equation}
\label{eq:auxillary-decomp}
\sup\limits_{0 \le s \le t}\left[V_{\la}(Z(s))\right]^{1 + \eps} = \exp\Bigl(\la_0\max\limits_{0 \le s \le t}B(s)\Bigr)\exp\Bigl(\la_0\sum_{i=1}^{\mathcal J(t)}\xi_i\Bigr). 
\end{equation}
The moment generating function $G_{\xi}(u) := \ME e^{u\xi}$ of $\xi_i$ is finite for $u = \la_0$. Therefore, the moment generating function of the random sum of random variables is equal to
$$
G(u) := \ME \exp\Bigl(u\sum_{i=1}^{\mathcal J(t)}\xi_i\Bigr) = \exp\left(r(G_{\xi}(u) - 1)\right).
$$
This quantity is also finite for $u = \la_0$. Finally, $\ME\exp\left(\la_0\max_{0 \le s \le t}B(s)\right) < \infty$. Apply~\eqref{eq:auxillary-decomp} and complete the proof of~\eqref{eq:finite-exp} together with the martingale property of~\eqref{eq:new-M} and Lemma~\ref{lemma:exact-rate}. 
\end{proof}

\section{Systems of Two Competing Levy Particles}

\subsection{Motivation and historical review} Finite systems of rank-based competing Brownian particles on the real line, with drift and diffusion coefficients depending on the current rank of the particle relative to other particles, were introduced in \cite{BFK2005} as a model for mathematical finance. Similar systems with Levy processes instead of Brownian motions were introduced in \cite{Shkolnikov2011}. One important question is {\it stability}: do the particles move together as $t \to \infty$? We can restate this question in a different way: Consider the gaps between consecutive ranked particles; do they converge to some stationary distribution as $t \to \infty$, and if yes, how fast? For competing Brownian particles, this question was resolved in \cite[Proposition 2]{Ichiba11}, \cite[Proposition 2.2]{MyOwn6}, \cite[Proposition 4.1]{MyOwn10}: necessary and sufficient conditions were found for stability. If the system is indeed stable, then the gap process has a unique stationary distribution $\pi$, and it converges to $\pi$ exponentially fast. 

However, it is a difficult question to explicitly estimate the rate of this exponential convergence; it was done in a particular case of unit diffusion coefficients in \cite{IPS2013}. For competing Levy particles, partial results for convergence were obtained in \cite{Shkolnikov2011}. However, an explicit estimate of the rate of exponential convergence remains unknown. In this section, we consider systems of two competing Levy particles. We improve the convergence conditions of \cite{Shkolnikov2011}. In some cases, we are able to find an explicit rate of convergence.

\subsection{Definition and construction} Take a drift vector and a positive definite symmetric matrix 
\begin{equation}
\label{eq:original-drift-diffusion}
(g_+, g_-) \in \BR^2,\ \ 
A = 
\begin{bmatrix} 
a_{++} & a_{+-}\\ a_{+-} & a_{--}
\end{bmatrix}
\end{equation}
Take a finite Borel measure $\Lambda$ on $\BR^2$. Consider a Levy process 
$L(t) = (L_+(t), L_-(t))'$, $t \ge 0$, on the space $\BR^2$, with drift vector $(g_+, g_-)'$, covariance matrix $A$, and jump measure $\Lambda$. Take two real-valued r.c.l.l. (right-continuous with left limits) processes $X_1(t), X_2(t), t \ge 0$, which satisfy the following system of equations:
\begin{equation}
\label{eq:main-CLP}
\begin{cases}
\md X_1(t) = 1\left(X_1(t) > X_2(t)\right)\md L_+(t) + 1\left(X_1(t) \le X_2(t)\right)\md L_-(t);\\
\md X_2(t) = 1\left(X_1(t) \le X_2(t)\right)\md L_+(t) + 1\left(X_1(t) > X_2(t)\right)\md L_-(t).
\end{cases}
\end{equation}
At each time $t \ge 0$, we rank the particles $X_1(t)$ and $X_2(t)$:
$$
Y_+(t) = X_1(t)\vee X_2(t),\ \ Y_-(t) = X_1(t)\wedge X_2(t),\ \ t \ge 0.
$$
In case of a tie: $X_1(t) = X_2(t)$, we assign to $X_2(t)$ the higher rank. (We say that ties are resolved in lexicographic order.) At this time $t$, the lower-ranked particle behaves as the process $L_-$, and the higher-ranked particle behaves as the process $L_+$. 


\begin{rmk} 
\label{rmk:indep-jumps}
Assume there exist finite Borel measures $\nu_-, \nu_+$ on $\BR$ such that 
$$
\Lambda(\md x_+, \md x_-) = \delta_{0}(\md x_+)\times\nu_-(\md x_-) + \nu_+(\md x_+)\times\delta_{0}(\md x_-), 
$$
then the jumps of the two particles occur independently. The jumps of the lower-ranked particle $Y_-(t)$ and the higher-ranked particle $Y_+(t)$ are governed by measures $\nu_-$ and $\nu_+$ respectively. 
\end{rmk}

\begin{lemma} There exists in the weak sense a unique in law solution to the system~\eqref{eq:main-CLP}. Moreover, consider the {\it gap process} $Z(t) = Y_+(t) - Y_-(t),\, t \ge 0$. This is a reflected jump-diffusion on $\BR_+$ with drift and diffusion coefficients 
\begin{equation}
\label{eq:drift-diffusion}
g = g_+ - g_-\ \ \mbox{and}\ \ \si^2 = a_{++} + a_{--} - 2a_{+-},
\end{equation}
and with the family $(\nu_z)_{z \in \BR_+}$ of jump measures, where for every $z \in \BR_+$, the measure $\nu_z$ is defined as the push-forward of the measure $\Lambda$ under the mapping $F_z : (x_+, x_-) \mapsto |x_+ - x_- + z|$. 
\label{lemma:gap-RJD}
\end{lemma}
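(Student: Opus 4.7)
The plan splits into two parts: first, weak existence and uniqueness in law for~\eqref{eq:main-CLP}; second, identification of the gap process with the stated reflected jump-diffusion.

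\emph{Step 1: Piecing out.} I use a construction analogous to Section~2. Decompose $L = L^c + L^d$, where $L^c$ is a Brownian motion with drift $(g_+, g_-)$ and covariance $A$ and $L^d$ is a compound Poisson process with rate $r := \Lambda(\BR^2) < \infty$ and mark distribution $r^{-1}\Lambda$. Between consecutive jump times $\tau_n < \tau_{n+1}$ of $L^d$, the system~\eqref{eq:main-CLP} driven by $L^c$ alone is a pair of rank-based competing Brownian particles with two ranks, for which weak existence and uniqueness in law is classical; see~\cite{BFK2005} and the references in Section~7.1. At each $\tau_n$, insert the mark as prescribed by~\eqref{eq:main-CLP} and restart. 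Because $\tau_n \to \infty$ a.s., this assembles a global r.c.l.l. weak solution, unique in law.

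\emph{Step 2: Continuous dynamics of the gap.} Set $D(t) := X_1(t) - X_2(t)$, so $Z(t) = |D(t)|$, and let $U := L_+ - L_-$, a one-dimensional L\'evy process with drift $g = g_+ - g_-$, Gaussian variance $\si^2 = a_{++} + a_{--} - 2a_{+-}$, and jump measure the pushforward of $\Lambda$ under $(x_+, x_-) \mapsto x_+ - x_-$. Subtracting the two lines of~\eqref{eq:main-CLP} yields
$$
\md D(t) = \eps(t-)\,\md U(t), \qquad \eps(t-) := 1(X_1(t-) > X_2(t-)) - 1(X_1(t-) \le X_2(t-)) \in \{\pm 1\},
$$
so that $\eps(t-) = \sign(D(t-))$ off the (time-Lebesgue-null) set $\{D = 0\}$. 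Decomposing $U = U^c + U^d$ and applying Tanaka's formula to the continuous part of $|D|$,
$$
\md |D|^c(t) = \eps(t-)^2\,\md U^c(t) + \md \ell^0(D)(t) = \md U^c(t) + \md \ell^0(D)(t),
$$
with $\ell^0(D)$ the semimartingale local time at zero. This exhibits the continuous part of $Z$ as that of a reflected diffusion with drift $g$, diffusion $\si^2$ and reflecting local time $\ml = \ell^0(D)$, in the sense of Definition~\ref{defn:SDE-R}.

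\emph{Step 3: Jump kernel.} At a jump time $\tau$ of $L^d$ with mark $(J_+, J_-) \sim r^{-1}\Lambda$, the two lines of~\eqref{eq:main-CLP} give $\Delta D(\tau) = \eps(\tau-)(J_+ - J_-)$. A direct case analysis on $\eps(\tau-) \in \{\pm 1\}$, using $|D(\tau-)| = Z(\tau-)$, yields in both cases
$$
Z(\tau) = |D(\tau-) + \eps(\tau-)(J_+ - J_-)| = |Z(\tau-) + J_+ - J_-|.
$$
Hence, given $Z(\tau-) = z$, the post-jump value is distributed as $F_z$ applied to $(J_+, J_-) \sim r^{-1}\Lambda$, i.e.\ as $r^{-1}\nu_z$ with $\nu_z := F_z\,\#\,\Lambda$, while jumps arrive at rate $r$. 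Since $\nu_z(\BR_+) = \Lambda(\BR^2) = r$, this matches precisely the jump rule in the construction of Section~2 for the family $(\nu_z)_{z \in \BR_+}$, and combined with Step~2 identifies $Z$ as the reflected jump-diffusion on $\BR_+$ with the stated parameters.

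\emph{Main obstacle.} The technical burden lies in Step~2: justifying the generalized Tanaka decomposition for the r.c.l.l. semimartingale $D$, cleanly separating continuous and jump contributions, and identifying the local time term with the reflecting process $\ml$ of Definition~\ref{defn:SDE-R}. A minor side check is that the lexicographic tie-breaking at $\{X_1 = X_2\}$ does not affect the drift/diffusion identification; this follows from the nondegeneracy $\si > 0$, which forces $\{t : D(t) = 0\}$ to have Lebesgue measure zero, so the ambiguity in $\eps$ at $D = 0$ is immaterial in the It\^o integrals.
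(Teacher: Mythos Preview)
Your argument is correct and, in outline, matches the paper: piecing out along the compound-Poisson jump times, identifying the continuous dynamics of the gap as a reflected Brownian motion with drift $g_+-g_-$ and variance $a_{++}+a_{--}-2a_{+-}$, and computing the post-jump law of the gap as the push-forward of $\Lambda$ under $F_z$. Your Step~3 is essentially the same computation the paper does, just phrased via the signed difference $D=X_1-X_2$ and the sign process $\eps$ rather than by fixing a lexicographic labeling.

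Where you differ is Step~2. The paper does not derive the reflecting behavior; it simply observes that between jumps the gap moves as $B_+-B_-$ and defers all formal details to the competing-Brownian-particles literature (\cite{BFK2005,Shkolnikov2011}). Your route through $D=X_1-X_2$, $\md D=\eps\,\md U$, and Tanaka's formula is a genuinely different and more self-contained derivation: it produces the reflecting local time $\ell^0(D)$ explicitly and explains \emph{why} the continuous part of $Z$ is a Skorokhod reflection, rather than importing that fact. The price is exactly the technical burden you flag: one must be precise that Tanaka is applied interval-by-interval between jumps (so that $D$ is continuous there), and that the coefficient in front of $\md U^c$ is really $\sign(D)\cdot\eps$, which equals $1$ off the Lebesgue-null set $\{D=0\}$ by nondegeneracy of $\si$ --- your writing $\eps^2$ is harmless because $\eps=\sign(D)$ off that set, but it is worth saying so. With those clarifications your argument is a clean upgrade of the paper's informal sketch.
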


\begin{rmk} Here, we construct a slightly more general version of a system of competing Levy particles than in \cite{Shkolnikov2011}. Indeed, in \cite{Shkolnikov2011} they assume that the jumps are independent, as in Remark~\ref{rmk:indep-jumps}; moreover, the diffusion parts are also uncorrelated, $a_{+-} = 0$, and $\nu_+ = \nu_-$. 
\end{rmk}

\begin{proof} Instead of writing all the formal details, which can be easily adapted from \cite{BFK2005, Shkolnikov2011}, let us informally explain  where the parameters from~\eqref{eq:drift-diffusion} come from and why the measure $\nu_z$ for each $z \in \BR_+$ is as described. Between jumps, the gap process moves as $B_+ - B_-$, where $(B_-, B_+)$ is a Brownian motion with drift vector and covariance matrix as in~\eqref{eq:original-drift-diffusion}. It is easy to calculate that $B_+ - B_-$ is a one-dimensional Brownian motion with drift and diffusion coefficients from~\eqref{eq:drift-diffusion}. 

Next, let us show the statement about the family of jump measures. Assume that $\tau$ is the moment of a jump, and immediately before the jump, the gap between two particles was equal to $z$: $Z(\tau-) = z$. Then $y_+ = Y_+(\tau-)$ and $y_- = Y_-(\tau-)$ satisfy $y_+ - y_- = z$. Assume without loss of generality that $X_1(\tau-) = y_-$ and $X_2(\tau-) = y_+$. (This choice is voluntary when there is no tie: $y_- \ne y_+$, but required if there is a tie: $y_- = y_+$, because ties are resolved in lexicographic order.) The displacement $(x_-, x_+)$ during the jump is distributed according to the normalized measure $\Lambda$, or, more precisely, $\left[\Lambda(\BR^2)\right]^{-1}\Lambda(\md x_+, \md x_-)$. After the jump, the positions of the particles are:
$$
X_1(\tau) = x_- + y_-,\ \ X_2(\tau) = x_+ + y_+.
$$
The new value of the gap process is given by
$$
Z(\tau) = |X_2(\tau) - X_1(\tau)| = |x_+ + y_+ - x_- - y_-| = |z + x_+ - x_-| = F_z(x_-, x_+). 
$$
Therefore, the destination of the jump of $Z$ from the position $z$ is distributed according to the probability measure $\tilde{\nu}_z$, which is the push-forward of the normalized measure 
$$
\left[\Lambda(\BR^2)\right]^{-1}\Lambda(\md x_+, \md x_-)
$$
with respect to the mapping $F_z$. The intensity of the jumps of $Z$ is constant and is always equal to the intensity of the jumps of the two-dimensional process $L$, that is, to $\Lambda(\BR^2)$. Therefore, the jump measure for $Z(t) = z$ is equal to the product of the intensity of jumps, which is $\Lambda(\BR^2)$, and the probability measure $\tilde{\nu}_z$. The rest of the proof is trivial. 
\end{proof}

\subsection{Uniform ergodicity of the gap process} Lemma~\ref{lemma:gap-RJD} allows us to apply previous results of this paper to this gap process. First, we apply Corollary~\ref{cor:m<0}. Recall the definition of the function $V_{\la}$ from~\eqref{eq:V}.

\begin{thm} Assume that
\begin{equation}
\label{eq:exp-moment}
\iint_{\BR^2}e^{\la_0(|x_+| + |x_-|)}\Lambda(\md x_+, \md x_-) < \infty\ \ \mbox{for some}\ \ \la_0 > 0;
\end{equation}
\begin{equation}
\label{eq:essential-drift<0}
g_+ - g_- + \iint_{\BR^2}\left[x_+ - x_-\right]\Lambda(\md x_+, \md x_-) < 0.
\end{equation}
Then for some $\la > 0$, the gap process is $V_{\la}$-uniformly ergodic, and the stationary distribution $\pi$ satisfies $(\pi, V_{\la}) < \infty$. 
\label{thm:CLP}
\end{thm}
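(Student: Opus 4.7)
The plan is to reduce Theorem~\ref{thm:CLP} to Corollary~\ref{cor:m<0} applied to the gap process $Z$, which is a reflected jump-diffusion on $\BR_+$ by Lemma~\ref{lemma:gap-RJD}. Since the drift $g = g_+ - g_-$ and diffusion $\si^2 = a_{++} + a_{--} - 2a_{+-}$ are constants, Assumption 1 and the boundedness of $\si^2$ required by Corollary~\ref{cor:m<0} hold automatically. It remains to verify, for the family $(\nu_z)_{z \in \BR_+}$, (i)~Assumption 2, (ii)~Assumption 3 with the $\la_0$ from~\eqref{eq:exp-moment}, and (iii)~the drift condition $\varlimsup_{z \to \infty} m(z) < 0$.

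For (i), the total mass is constant, $r(z) = \nu_z(\BR_+) = \Lambda(\BR^2)$, since $F_z$ is defined on all of $\BR^2$; in particular $\sup_z r(z) < \infty$. Weak continuity is immediate: if $z_n \to z_0$ and $f \in C_b(\BR_+)$, then $f \circ F_{z_n}$ converges pointwise to $f \circ F_{z_0}$ and is uniformly bounded, so $(\nu_{z_n}, f) = \iint f(F_{z_n})\,\md\Lambda \to (\nu_{z_0}, f)$ by dominated convergence (using finiteness of $\Lambda$).

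The main step is (ii). The reverse triangle inequality gives, for every $z \ge 0$,
$$
|F_z(x_+, x_-) - z| = \bigl||x_+ - x_- + z| - z\bigr| \le |x_+ - x_-| \le |x_+| + |x_-|.
$$
Hence
$$
\int_0^{\infty} e^{\la_0 |y - z|}\nu_z(\md y) = \iint_{\BR^2} e^{\la_0 |F_z(x_+, x_-) - z|}\Lambda(\md x_+, \md x_-) \le \iint_{\BR^2} e^{\la_0(|x_+| + |x_-|)}\Lambda(\md x_+, \md x_-),
$$
which is finite and independent of $z$ by~\eqref{eq:exp-moment}. This is the one non-routine ingredient: without the bound $|F_z - z| \le |x_+|+|x_-|$, nothing prevents the moment from blowing up as $z \to \infty$.

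Finally, for (iii), fix $(x_+, x_-) \in \BR^2$; once $z > |x_+ - x_-|$ we have $x_+ - x_- + z > 0$, so $F_z(x_+, x_-) - z = x_+ - x_-$. Thus the integrand in
$$
m(z) = g_+ - g_- + \iint_{\BR^2}(F_z(x_+, x_-) - z)\,\Lambda(\md x_+, \md x_-)
$$
converges pointwise to $x_+ - x_-$ as $z \to \infty$, and is dominated by $|x_+| + |x_-|$, which is $\Lambda$-integrable by~\eqref{eq:exp-moment}. Dominated convergence yields
$$
\lim_{z \to \infty}m(z) = g_+ - g_- + \iint_{\BR^2}(x_+ - x_-)\,\Lambda(\md x_+, \md x_-) < 0
$$
by~\eqref{eq:essential-drift<0}. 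All hypotheses of Corollary~\ref{cor:m<0} are now in place, so it delivers a $\la > 0$ such that the gap process is $V_\la$-uniformly ergodic with $(\pi, V_\la) < \infty$, completing the proof.
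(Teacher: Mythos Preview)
Your proof is correct and follows essentially the same approach as the paper: both reduce the theorem to Corollary~\ref{cor:m<0} by verifying Assumptions~1--3 and the drift condition~\eqref{eq:m<0} for the gap process, using the same key inequality $|F_z(x_+,x_-)-z|\le |x_+|+|x_-|$ for Assumption~3. Your verification of $\varlimsup_{z\to\infty} m(z)<0$ via dominated convergence is in fact a bit cleaner than the paper's explicit case-splitting computation, but the underlying idea is the same.
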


We can rewrite the condition~\eqref{eq:essential-drift<0} as $m_+ < m_-$, where the magnitudes 
$$
m_+ = g_+ + \iint_{\BR^2}x_+\Lambda(\md x_+, \md x_-)\ \ \mbox{and}\ \ m_- = g_- + \iint_{\BR^2}x_-\Lambda(\md x_+, \md x_-)
$$
can be viewed as {\it effective drifts} of the upper- and the lower-ranked particles: the sum of the true drift coefficient and the mean value of the displacement during the jump, multiplied by the intensity of jumps. This is analogous to the stability condition for a system of two competing Brownian particles from \cite{Ichiba11}: the drift (in this case, the true drift) of the bottom particle must be strictly greater than the drift of the top particle. 

\begin{proof} Let us check conditions of Corollary~\ref{cor:m<0}. Assumption 1 and the boundedness of $\si^2$ are trivial. Assumption 2 follows from the fact that the function $F_z(x_+, x_-)$ is continuous in $z$. Assumption 3 follows from the condition~\eqref{eq:exp-moment}. Indeed, for $(x_+, x_-) \in \BR^2$ and $z \in \BR_+$, we have: $||x_+ - x_- + z| - z| \le |x_-| + |x_+|$. Therefore, we get: for every $z \in \BR_+$, 
$$
\int_{\BR_+}e^{\la_0|w-z|}\nu_z(\md w) = \iint_{\BR^2}e^{\la_0||x_+ - x_- + z| - z|}\Lambda(\md x_+, \md x_-) \le \iint_{\BR^2}e^{\la_0(|x_-| + |x_+|)}\Lambda(\md x_+, \md x_-) < \infty.
$$
Finally, let us check the condition~\eqref{eq:m<0}. Indeed, 
\begin{align*}
\int_{\BR_+}[w - z]\nu_z(\md w) &= \iint_{\BR^2}\left[|z + x_+ - x_-| - z\right]\Lambda(\md x_+, \md x_-) \\ & = \iint\limits_{\{x_- - x_+ > z\}}\left[x_- - x_+ - 2z\right]\Lambda(\md x_+, \md x_-) + \iint\limits_{\{x_- - x_+ \le z\}}\left[x_+ - x_-\right]\Lambda(\md x_+, \md x_-) \\ & = \iint\limits_{\{x_- - x_+ > z\}}\left[2x_- - 2x_+ - 2z\right]\Lambda(\md x_+, \md x_-) + \iint_{\BR^2}\left[x_+ - x_-\right]\Lambda(\md x_+, \md x_-).
\end{align*}
However, 
$$
\iint\limits_{\{x_- - x_+ > z\}}\left[2x_- - 2x_+ - 2z\right]\Lambda(\md x_+, \md x_-) \le 2\iint\limits_{\{x_- - x_+ > z\}}\left[x_- - x_+\right]\Lambda(\md x_+, \md x_-).
$$
There exists a constant $C_0 > 0$ such that $s \le C_0e^{\la_0s}$ for $s \ge 1$. Therefore, for $z \ge 1$, 
\begin{align*}
\iint\limits_{\{x_- - x_+ > z\}}&\left[x_- - x_+\right]\Lambda(\md x_+, \md x_-) \le C_0\iint\limits_{\{x_- - x+ > z\}}e^{\la_0(x_- - x_+)}\Lambda(\md x_+, \md x_-) \\ & \le C_0\iint\limits_{\{x_- - x+ > z\}}e^{\la_0(|x_-| + |x_+|)}\Lambda(\md x_+, \md x_-) \to 0
\end{align*}
as $z \to \infty$, because of~\eqref{eq:exp-moment}. Combining this with previous estimates, we get:
\begin{equation}
\label{eq:777}
\varlimsup\limits_{z \to \infty}\int_{\BR_+}[w - z]\nu_z(\md w) \le \iint_{\BR^2}\left[x_+ - x_-\right]\Lambda(\md x_+, \md x_-).
\end{equation}
Combining~\eqref{eq:777} with~\eqref{eq:essential-drift<0}, we complete the proof of~\eqref{eq:m<0}. 
\end{proof}

\begin{rmk} For the case of independent jumps, as in Remark~\ref{rmk:indep-jumps}, 
condition~\eqref{eq:exp-moment} can be written as
$$
\int_{-\infty}^{\infty}e^{\la_0|x_-|}\nu_-(\md x_-) < \infty\ \ \mbox{and}\ \ \int_{-\infty}^{\infty}e^{\la_0|x_+|}\nu_+(\md x_+) < \infty\ \ \mbox{for some}\ \ \la_0 > 0.
$$
and condition~\eqref{eq:essential-drift<0} can be written as
$$
g_+ + \int_{-\infty}^{\infty}x_+\nu_+(\md x_+) < g_- + \int_{-\infty}^{\infty}x_-\nu_-(\md x_-).
$$
\end{rmk}

\subsection{Explicit rate of exponential convergence} In some cases, we are able to find an explicit rate $\vk$ of exponential convergence for the gap process. This is true when the gap process is either stochastically ordered or dominated by some stochastically ordered reflected jump-diffusion. First, consider the case when the gap process is itself stochastically ordered. In addition to the assumptions of Theorem~\ref{thm:CLP}, let us impose the following assumption.

\begin{asmp}
The measure $\Lambda$ is supported on the subset $\{(x_+, x_-) \in \BR^2\mid x_+ \ge x_-\}$.
\end{asmp}

Then the family of jump measures $(\nu_z)_{z \in \BR_+}$ is stochastically ordered, because for $x_+ \ge x_-$, we have: $|z + x_+ - x_-| = z + x_+ - x_-$, and this quantity is increasing with respect to $z$. Therefore, we can apply Theorem~\ref{thm:3}. We have: 
$F_z(x_+, x_-) -z = |z + x_+ - x_-| - z = x_+ - x_-$. 
Applying the push-forward to the measure $\La$, for $g$ and $\si^2$ from~\eqref{eq:drift-diffusion}, we get:
\begin{align*}
K(z, \la) &= g\la + \frac{\si^2}2\la^2 + \int_{\BR_+}\left[e^{\la(w-z)} - 1\right]\nu_z(\md w) \\ & = g\la + \frac{\si^2}2\la^2 + \iint_{\BR^2}\left[e^{\la(x_+ - x_-)} - 1\right]\Lambda(\md x_+, \md x_-).
\end{align*}
This quantity does not actually depend on $z$, so we can denote it by $K(\la)$. If $k(\la) < 0$ for some $\la > 0$, then the gap process is $V_{\la}$-uniformly ergodic with exponent of ergodicity $\vk = |K(\la)|$. In particular, if we wish to maximize the rate $\vk$ of convergence, we need to minimize $K(\la)$. Actually, under Assumption 4, we are in the setting of Section 6, and $\vk = |K(\la)|$ gives the exact rate of exponential convergence in the $V_{\la}$-norm. 

\begin{rmk} For the case of independent jumps from Remark~\ref{rmk:indep-jumps}, Assumption 4 is equivalent to the condition that the measure $\nu_+$ is supported on $\BR_+$, the measure $\nu_-$ is supported on $\BR_-$, and 
$$
K(\la) = g\la + \frac{\si^2}2\la^2 + \int_{-\infty}^{\infty}\left[e^{\la x_+} - 1\right]\nu_+(\md x_+) + \int_{-\infty}^{\infty}\left[e^{-\la x_-} - 1\right]\nu_-(\md x_-).
$$
\end{rmk}

\begin{exm} Consider a system of two competing Levy particles with parameters 
$$
g_+ = 0,\ g_- = 3,\ a_{++} = a_{--} = 1,\ a_{+-} = 0,
$$
and measures $\nu_+$ on $\BR_+$ and $\nu_-$ on $\BR_-$ with densities
$$
\nu_+(\md x_+) = 1_{\{x_+ \ge 0\}}e^{-x_+}\md x_+,\ \ \nu_-(\md x_-) = 1_{\{x_- \le 0\}}e^{x_-}\md x_-.
$$
In other words, the upper particle can jump upwards, and the lower particle can jump downwards. For each particle, its jumps occur with intensity $1$, and the size of each jump is distributed as $\Exp(1)$. Then conditions of Theorem~\ref{thm:CLP}, as well as Assumption 4, are fulfilled. Knowing the moment generating function of the exponential distribution, we can calculate
$$
K(\la) = -3\la + \la^2 + \frac{2\la}{1 - \la}. 
$$
This function obtains minimal value $-0.0748337$ at $\la_* = 0.141906$. Therefore, the gap process is $V_{\la_*}$-uniformly ergodic with exponential rate of convergence $\vk = 0.0748337$. 
\end{exm}

The next example is when the gap process is not stochastically ordered, but is dominated by a stochastically ordered uniformly ergodic reflected jump-diffusion; we use Corollary~\ref{cor:non-stoch-ordered-RJD}.

\begin{exm} Take a system of two competing Levy particles with independent jumps, governed by measures $\nu_+ = 0$ and $\nu_- = \de_{1}$, with drift and diffusion coefficients 
$$
g_+ = 0,\ g_- = 2,\ a_{++} = a_{--} = 1,\ a_{+-} = 0.
$$
As follows from the results of Section 6, the gap process is a reflected jump-diffusion with $g = -2$, $\si^2 = 2$, and $\nu_x = \de_{|x-1|}$ for $x \in \BR_+$. But $\nu_x \preceq \ol{\nu}_x := \de_{x+1}$ for $x \in \BR_+$, and so 
$$
\ol{K}(x, \la) \equiv \ol{K}(\la) = -2\la + \la^2 + e^{\la} - 1.
$$
This function assumes its minimal value $-0.160516$ at $\la_* = 0.314923$. Therefore, the gap process is $V_{\la_*}$-uniformly ergodic with exponential rate of convergence $\vk = 0.160516$. 
\end{exm}

\section*{Acknowledgements}

This research was partially supported by NSF grants DMS 1007563, DMS 1308340, DMS 1409434, and DMS 1405210. The author thanks Amarjit Budhiraja, Tomoyuki Ichiba, and Robert Lund for useful suggestions.


%
%

\medskip\noindent

\end{document}